\documentclass[reqno,oneside,12pt,hidelinks]{amsart}
\usepackage{amssymb,amsmath,amsthm,amsxtra,calc,bm, color}
\usepackage{enumerate}
\usepackage[margin=.95in]{geometry}

\usepackage[scr=boondoxo]{mathalfa}
\usepackage{mathtools}

\usepackage{mleftright}
\mleftright

\usepackage{nccmath}
\usepackage{cases}

\usepackage{calc}
\usepackage{graphicx}
\usepackage{hyperref}
\usepackage[final]{microtype}

\renewcommand{\(}{\left(}
\renewcommand{\)}{\right)}
\newcommand{\spmod}[1]{\ensuremath{\,(#1)}}

\renewcommand{\|}{\big |}

\def\Z{\mathbb{Z}}
\def\Q{\mathbb{Q}}
\def\R{\mathbb{R}}
\def\H{\mathbb{H}}

\def\C{\mathbb{C}}

\def\F{\mathbb{F}}

\def\SL{{\rm SL}}
\def\PSL{{\rm PSL}}
\def\GL{{\rm GL}}

\newcommand{\pfrac}[2]{\left(\frac{#1}{#2}\right)}
\newcommand{\pmfrac}[2]{\left(\mfrac{#1}{#2}\right)}
\newcommand{\ptfrac}[2]{\left(\tfrac{#1}{#2}\right)}
\newcommand{\pMatrix}[4]{\left(\begin{matrix}#1 & #2 \\ #3 & #4\end{matrix}\right)}

\renewcommand{\pmatrix}[4]{\left(\begin{smallmatrix}#1 & #2 \\ #3 & #4\end{smallmatrix}\right)}
\renewcommand{\bar}[1]{\overline{#1}}

\newcommand{\Sh}{\mathcal{S}}

\renewcommand{\c}{\mathfrak{c}}

\renewcommand{\sl}{\big| }
\DeclareMathOperator{\new}{new}

\def\ep{\varepsilon}

\newtheorem{theorem}{Theorem}[section]
\newtheorem{lemma}[theorem]{Lemma}

\newtheorem{proposition}[theorem]{Proposition}

\theoremstyle{remark}
\newtheorem*{remark}{Remark}

\newtheorem*{definition}{Definition}
\numberwithin{equation}{section}

\newcommand{\Frob}{\operatorname{Frob}}
\mathtoolsset{showonlyrefs}

\newcommand{\Gal}{\operatorname{Gal}}

\title[Quadratic congruences for cusp forms with eta multiplier]{Quadratic congruences for half-integral weight cusp forms with the eta multiplier}

\date{\today}

\author{Robert Dicks}
\address{Department of Mathematics\\
Clemson University\\
Clemson, SC 29634} 
\email{rdicks@clemson.edu}

\begin{document}

\begin{abstract}
Let $\ell \geq 5$ be a prime, and let $\nu_\eta$ denote the Dedekind eta multiplier.
For an odd integer $r$, and a real Dirichlet character $\psi$, recent work of Ahlgren, Andersen, and the author showed that quadratic congruences modulo $\ell$ hold for a wide range of half-integral weight cusp forms with multiplier $\psi\nu_\eta^r$, vastly generalizing certain congruences discovered by Atkin for the partition function.

In this paper, we show that such congruences hold when $\psi$ is an arbitrary character. Our methods rely on the theory of modular Galois representations. 
For primes $\ell \geq 5$, the core of our work is the study of modular Galois representations modulo $\ell$ attached to integer-weight eigenforms with arbitrary Nebentypus whose images are large in a precise sense.
Our key new result is that, given a finite set of such representations and $\gamma \in \SL_2(\F_\ell)$, there exists $\sigma \in \Gal(\bar{\Q}/\Q(\zeta_\ell))$ whose images under the representations are in the conjugacy class of $\gamma^2$.
\end{abstract}

\maketitle

\section{Introduction}
For a positive integer $n$, the partition function $p(n)$ counts the number of ways of writing $n$ as a sum of a nonincreasing sequence of positive integers (where we agree by convention that $p(0)=1$ and $p(n)=0$ whenever $n \not \in \Z_{\geq0}$). The study of the arithmetic properties of $p(n)$ has been deeply influential for the arithmetic of modular forms. 
For example, the Ramanujan congruences \cite{Ramanujan}

\begin{equation} \label{eq:ram-cong}
    p\pfrac{\ell n+1}{24} \equiv 0 \pmod{\ell}, \qquad \ell=5,7,11,
\end{equation}
anticipated arithmetic phenomena which occur for the coefficients of many weakly holomorphic modular forms.

After the work of Ramanujan, many other congruences for $p(n)$ were found, the most relevant of which for the present paper are Atkin's congruences, which we now describe.
For several primes $\ell \leq 31$, Atkin \cite[eq.~$(52)$]{Atkin2} found examples of congruences of the form
\begin{equation} \label{eq:atkin-cong}
    p\pfrac{\ell Q^2 n+\beta}{24} \equiv 0 \pmod{\ell} \quad \text{ if } \ \ \pfrac nQ = \ep_Q,
\end{equation}
where $Q$ is a prime different from $\ell$, $\beta$ is an integer, and $\ep_Q \in \{\pm 1\}$.
Recent work of Ahlgren, Allen, and Tang \cite{AAT} used modular Galois representations to show that, for every prime $\ell \geq 5$, there exists a positive density set of primes $S$ such that congruences of the form \eqref{eq:atkin-cong} hold for $Q \in S$ . 

The techniques of Ahlgren, Allen, and Tang are Galois theoretic and use work of Yang \cite{Yang} on the usual Shimura correspondence for half-integral weight cusp forms on $\SL_2(\Z)$. More precisely, let $f(z)$ be such a form which transforms with multiplier $\nu^r_\eta$, where $\nu_\eta$ is the multiplier for the Dedekind eta function (see \eqref{etamultiplier} below), and $r$ is an integer with $(r,6)=1$. Then $f(24z)$ is a half-integral weight form in the sense of Shimura. Yang obtained precise information on the images of the Shimura lifts applied to $f(24z)$ (he proves a similar result when $(r,6)=3$).  

For many values $N \in \Z^{+}$,
Ahlgren, Andersen, and the author \cite{AAD} developed a precise Shimura correspondence for cusp forms of half-integral weight with eta multiplier of level $N$. 
The arithmetic consequences of that work generalize congruences like \eqref{eq:atkin-cong} to a wide class of forms in $S_{\lambda+\frac{1}{2}}(N,\psi\nu_\eta^r)$ (see Section $2$ for details on modular forms and related notation and terminology).
Here,  $\lambda\in \Z^{+}$, $r$ is an odd integer and $\psi$ is a real Dirichlet character modulo $N$.
In this paper, we prove similar results when $\psi$ is an \emph{arbitrary} Dirichlet character modulo $N$.

Stating our results requires some notation.  Let $\lambda, N \in \Z^{+}$ and let $r$ be an odd integer. 
Let $\psi$ be a Dirichlet character modulo $N$.
If $(N,6)=1$, then we let 
$S^{\operatorname{new}2,3}_{2\lambda}(6N, \psi^2, \epsilon_2,\epsilon_3)$
consist of the cusp forms of weight $2\lambda$ on $\Gamma_0(6N)$ with character $\psi^2$ which are new at $2$ and $3$ and with Atkin-Lehner eigenvalues $\epsilon_2$ and $\epsilon_3$, respectively, at those primes. We assume the forms satisfy certain integrality conditions which we have suppressed from the notation (see Section $2$ for details). We make a similar definition for $S^{\operatorname{new}2}_{2\lambda}(2N, \psi^2, \epsilon_2)$ when $N$ is odd. Let $\(\frac{\bullet}{\bullet}\)$ denote the extended quadratic symbol and define
\[
\ep_{2, r, \psi}:=-\psi(2)\pmfrac{8}{r/(r,3)}, \ \ \ \ 
\ep_{3, r, \psi}:=-\psi(3)\pmfrac{12}{r},
\]

In weight $3/2$, we need to avoid unary theta series by virtue of the Shimura correspondence that we use. When $(r,6)=1$ we let $S_{3/2}^c(N,\psi\nu_\eta^r)$ consist of the forms $F \in S_{3/2}(N,\psi\nu_\eta^r)$ which satisfy
\begin{equation}\label{eq:orthogtheta}
    \left\langle F\sl V_{24}, G \right\rangle = 0 \ \text{ for all theta functions }\  G\in S_{\frac 32} \left(576N, \psi \ptfrac{-1}{\bullet}^{\frac{r+1}{2}}\ptfrac{12}\bullet \nu_\theta^{3}\right),
\end{equation}
where $\langle \cdot, \cdot \rangle$ is the usual Petersson inner product, $V_{24}$ is the operator defined in \eqref{Voperator} and $\nu_\theta$ is the multiplier for the usual theta function. We make the same definition for $S^{c}_{3/2}(N,\psi\nu^r_{\eta})$ when $(r,6)=3$ with $S_{\frac 32} \left(576N, \psi \ptfrac{-1}{\bullet}^{\frac{r+1}{2}}\ptfrac{12}\bullet \nu_\theta^{3}\right)$ replaced by $S_{\frac 32}\(64N,  \psi\ptfrac{-1}\bullet^{\lambda+\frac{r-1}2} \nu_{\theta}^{2\lambda+1}\)$.

Crucial to our first result is the notion of suitability. For each newform $f$ in the spaces under our consideration, this is a technical hypothesis on the modulo $\ell$ reduction $\bar{\rho}_f$   of the $\ell$-adic Galois representation $\rho_f$ associated with $f$  (see Theorem~\ref{bigGalThm}
 below); our definition  specializes to that of \cite[$\mathsection$ $1$]{AAD} when $\psi$ is real. 
 
 \begin{definition}
\label{def:suitable}
Suppose that $\ell \geq 5$ is prime and that $r$ is an odd integer. Suppose that $N$ is a squarefree, odd, positive integer with $\ell \nmid N$. Assume that $3 \nmid N$ if $3 \nmid r$. Suppose that $\psi$ is a Dirichlet character modulo $N$, and  let $k$ be a positive even integer.

If $(r,6)=1$, then 
we say that the pair $(k,\ell)$ is \emph{suitable} for the triple $(N,\psi,r)$ if
for every newform $f \in S^{\text{new } 2,3}_{k}(6N,\psi^2,\ep_{2,r,\psi},\ep_{3,r,\psi})$, the image of $\bar{\rho}_f$ contains a conjugate of 
$\SL_2(\mathbb{F}_\ell)$. If $(r,6)=3$, then we make a similar definition for newforms in $S^{\new 2}_{k}(2N,\psi^2,\ep_{2,r,\psi})$.
\end{definition}
 \begin{remark}
 For $\lambda \in \Z^{+}$, we will show that $(2\lambda, \ell)$ is suitable for every triple $(N, \psi, r)$ if the following conditions hold (see Proposition~\ref{3.3 analogue} below):
 \begin{itemize}
 \item
 $\ell > 10\lambda-4$.
 \item
 $\psi(n) \neq -1$ for all $n \in \Z$.
 \item
 $\ell \nmid \phi(N)$.
 \item
  $2^{2\lambda-1} \not \equiv 2^{\pm 1} \pmod{\ell}$.
 \end{itemize}
\end{remark}

Finally, for $z$ in the complex upper half-plane, we define $q=q(z)=e^{2 \pi i z}$.
Our main result is as follows. 
\begin{theorem}\label{thm:cong1}
Suppose that  $\ell \geq 5$ is prime and that $r$ is an odd integer.
Suppose that $m$ and $\lambda$ are positive integers.
Suppose that $N$ is a squarefree, odd, positive integer with $\ell \nmid N$. Assume that $3 \nmid N$ if $3 \nmid r$. 
Let $\psi$ be a Dirichlet character modulo $N$.
Suppose that 
\begin{equation}
F(z) =\displaystyle \sum_{n \equiv r \spmod{24}} a(n)q^{\frac{n}{24}} \in S_{\lambda+\frac{1}2}(N, \psi\nu_\eta^{r})
\end{equation}
with $(2\lambda,\ell)$ suitable for $(N,\psi,r)$, and if $\lambda=1$, suppose further that $F \in S^{c}_{{3}/2}(N,\psi\nu_\eta^r)$. 
Then there is a positive density set $S$ of primes such that if $p \in S$, then $p \equiv 1 \pmod{\ell^m} $ and
\begin{equation}
a(p^2n) \equiv 0 \pmod{\ell^m}  \ \ \ \ \text{ if } \ \ \ \(\frac{n}{p}\)=\begin{cases} 
-\(\frac{-1}{p}\)^{\frac{r-1}{2}}\psi(p) & \text{if } 3 \nmid r, \\
-\(\frac{-3}{p}\)\(\frac{-1}{p}\)^{\frac{r-1}{2}}\psi(p) & \text{if }  3\mid r.
\end{cases}
\end{equation}
\end{theorem}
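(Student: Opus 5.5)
We follow the strategy of \cite{AAT} and \cite{AAD}: transport the problem to integral weight through the Shimura correspondence of \cite{AAD}, then use Chebotarev density to choose primes $p$ at which all the relevant Galois representations take a prescribed value modulo $\ell^m$.

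\textbf{Step 1 (Shimura lift).} Apply the Shimura correspondence of \cite{AAD}, extended to arbitrary $\psi$ (here the condition $F\in S^c_{3/2}$ removes the theta series when $\lambda=1$). It sends $F$ to a form $G=\Sh_t(F)$ in $S^{\new 2,3}_{2\lambda}(6N,\psi^2,\ep_{2,r,\psi},\ep_{3,r,\psi})$ when $(r,6)=1$, and into $S^{\new 2}_{2\lambda}(2N,\psi^2,\ep_{2,r,\psi})$ when $(r,6)=3$. The map is Hecke equivariant: $T_{p^2}$ on the half-integral weight side corresponds to $T_p$ on the integral weight side. Expanding $G$ modulo $\ell^m$ in a basis of newforms $f_1,\dots,f_s$ (after enlarging the coefficient ring), it suffices to find a positive density set of primes $p\equiv 1\pmod{\ell^m}$ such that
\[
a_{f_i}(p)\equiv -2p^{\lambda-1}\chi_p \pmod{\ell^m}\qquad\text{for all } i,
\]
where $\chi_p$ is the appropriate sign, a character value built from $\psi(p)$ and the quadratic symbols in the statement. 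For such $p$ the operator $T_{p^2}$ acts on $F$ modulo $\ell^m$ as the scalar $-2p^{\lambda-1}\chi_p$. The explicit formula
\[
a(p^2n)+\chi(p)\pfrac{\ast n}{p}p^{\lambda-1}a(n)+\chi(p^2)p^{2\lambda-1}a(n/p^2)
\]
for $T_{p^2}$ then gives the claimed vanishing when $\pfrac np$ has the stated sign. This is done by the standard manipulation: replace $n$ by $pn$ and $n/p^2$ and solve the resulting relations, as in \cite{AAD}.

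\textbf{Step 2 (Galois choice).} With $p\equiv 1\pmod{\ell^m}$, the target condition is that $\rho_{f_i}(\Frob_p)\bmod \ell^m$ has trace $-2\chi_p p^{\lambda-1}$, for instance that it is conjugate to $-\chi_p$ times a nontrivial unipotent element. Suitability says that each $\bar\rho_{f_i}$ has image containing $\SL_2(\F_\ell)$. We then follow \cite{AAT}. A lifting argument of Boston/Ribet type shows that the $\ell$-adic images contain $\SL_2(\Z_\ell)$ up to conjugacy, so it suffices to handle the residual problem together with a compatible choice across the finitely many $f_i$. Because the determinant $\psi^2\varepsilon_\ell^{2\lambda-1}$ involves the nonreal character $\psi^2$, we cannot simply restrict to a field where the determinants are trivial. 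Instead we restrict to $\Gal(\bar\Q/\Q(\zeta_{\ell^m}))$ intersected with the kernel of $\psi$, and we need an element $\sigma$ whose images under all the $\bar\rho_{f_i}$ simultaneously land in prescribed conjugacy classes. This is exactly the key result announced in the abstract: for any $\gamma\in\SL_2(\F_\ell)$ there is $\sigma\in\Gal(\bar\Q/\Q(\zeta_\ell))$ with each $\bar\rho_{f_i}(\sigma)$ conjugate to $\gamma^2$. Taking $\gamma$ to be a suitable unipotent element, $\gamma^2$ is again a nontrivial unipotent. The sign $-\chi_p$ is absorbed by twisting: where the representations differ by quadratic or character twists, we use the squares trick to kill the ambiguity. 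Chebotarev applied to the compositum of the fixed fields of the $\rho_{f_i}\bmod\ell^m$, together with $\Q(\zeta_{\ell^m})$ and the field cut out by $\psi$, then gives a positive density of $p$.

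\textbf{Main obstacle.} The hardest step is the simultaneous choice in Step 2. Distinct $f_i$ may have residual representations that are twists of one another by characters, possibly nonquadratic because $\psi$ is arbitrary. In that case the image of $\prod_i\bar\rho_{f_i}$ is not the full product of the $\SL_2$'s, and for a generic target one cannot prescribe the conjugacy classes independently. The plan is a Goursat-lemma analysis of the image of the product representation restricted to $\Gal(\bar\Q/\Q(\zeta_\ell))$. The projections to $\PSL_2(\F_\ell)$ are either independent or related by automorphisms, and automorphisms of $\PSL_2(\F_\ell)$ preserve the class of a square $\gamma^2$ in $\SL_2(\F_\ell)$. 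The twisting characters act by scalars that vanish on squares. This is why one aims for $\gamma^2$ rather than $\gamma$. A final step lifts the residual statement to modulo $\ell^m$ using the large $\ell$-adic image.
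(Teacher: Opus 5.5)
Your overall architecture (Shimura lift, Hecke equivariance, a simultaneous Chebotarev choice, and squaring to absorb the sign ambiguity coming from twists) matches the paper. There is a genuine gap, however: the Frobenius class you aim for in Step~1 is the wrong one, and with it no congruence follows.

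\textbf{Why the unipotent target fails.} Put $\epsilon_p=\pfrac{-1}{p}^{\frac{r-1}{2}}\pfrac{12}{p}$. Suppose $F\,|\,T_{p^2}\equiv\mu F\pmod{\ell^m}$. Take $p\nmid n$; the case $p\mid n$ is excluded by the condition on $\pfrac np$. The formula for $T_{p^2}$ then gives
\[
a(p^2n)\equiv\Bigl(\mu-\epsilon_p\pfrac{n}{p}\psi(p)p^{\lambda-1}\Bigr)a(n)\pmod{\ell^m}.
\]
The stated condition on $\pfrac np$ is vacuous unless $\psi(p)=\pm1$, so assume this.
\begin{itemize}
\item Vanishing on a whole class of $n$ requires $\mu\equiv\pm\psi(p)p^{\lambda-1}$. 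By equivariance this means $a_{f_i}(p)\equiv\pm\psi(p)p^{\lambda-1}$ for the newforms.
\item Your target is $a_{f_i}(p)\equiv-2\chi_p p^{\lambda-1}$, i.e.\ a signed nontrivial unipotent Frobenius. It gives $\mu=-2\chi_p\chi^{(r)}(p)p^{\lambda-1}$.
\item The bracket is then $p^{\lambda-1}$ times one of $\pm1,\pm3$, which is a unit modulo $\ell\geq5$. So $a(p^2n)\equiv0$ exactly when $a(n)\equiv0$, and no manipulation of the recursion rescues this.
\end{itemize}
Consequently Step~2, which is built around a unipotent $\gamma$ with $\gamma^2$ unipotent, is aimed at a class that cannot yield the theorem.

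\textbf{The correct target.} You need residual Frobenius with characteristic polynomial $x^2+x+1$: with $p\equiv1\pmod{\ell^m}$, you want $f_i\,|\,T_p\equiv-f_i$. This is where squaring genuinely enters.
\begin{itemize}
\item Apply Proposition~\ref{nonuniformconj} with $\gamma=\pmatrix{1}{1}{-1}{0}$. Whichever of $\pm\gamma$ occurs for each $f_i$, its square is $\pmatrix{0}{1}{-1}{-1}$, which has trace $-1$.
\item Lemma~\ref{5.1 analogue} then gives $F\,|\,T_{p^2}\equiv-\chi^{(r)}(p)F$.
\item The bracket above vanishes exactly when $\pfrac np=-\chi^{(r)}(p)\epsilon_p\psi(p)$. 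This is the stated class, since $\chi^{(r)}(p)\pfrac{12}{p}$ equals $1$ if $3\nmid r$ and $\pfrac{-3}{p}$ if $3\mid r$.
\end{itemize}
This residual class has order $3$, prime to $\ell$. So the passage to modulus $\lambda^w$ is just $\sigma\mapsto\sigma^{\ell^{w-1}}$ followed by Chebotarev. No Boston/Ribet $\ell$-adic big-image input and no $\ell$-adic Goursat analysis is needed.

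Finally, the coefficients expressing $\Sh_t(F)$ in terms of the $f_i\,|\,V_d$ may have $\ell$ in their denominators. You must therefore obtain the eigenvalue congruence modulo $\ell^{m+c}$, not $\ell^m$.
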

\begin{remark}
In the theorem above and the theorems which follow, our definition of density is that of natural density. The set $S$ depends on $F$, $\ell$, and $m$ and is frobenian in the sense of Serre (see \cite[$\mathsection$ $3.3$]{serre-NXp} and the discussion at the end of \cite[$\mathsection$ $1$]{AAT}).
\end{remark}
Our next result does not rely on suitability; it is an analogue of \cite[Theorem $1.4$]{AAD}.

 \begin{theorem}\label{thm:cong2}
 Suppose that $\ell \geq 5$ is prime and that $r$ is an odd integer.
 Let $m$ and $\lambda$ be positive integers.
 Suppose that $N$ is a squarefree, odd, positive integer with $\ell \nmid N$. Assume that $3 \nmid N$ if $3 \nmid r$.
 Let $\psi$ be a Dirichlet character modulo $N$. Suppose that there exists  $a \in \Z$ with the property that
 \begin{equation}\label{Hasse}
 2^{a} \equiv -2 \pmod\ell . 
 \end{equation}
   Let 
   \begin{equation}
   F(z) =\displaystyle \sum_{n \equiv r \spmod{24}} a(n)q^{\frac{n}{24}} \in S_{\lambda+\frac{1}2}(N, \psi\nu_\eta^{r}),
  \end{equation}
and if $\lambda=1$, suppose further that $F \in S^{c}_{{3}/2}(N,\psi\nu_\eta^r)$.
 Then there is a positive density set $S$ of primes such that if $p \in S$, then $p \equiv -2 \pmod{\ell^{m}}$ and for some $\ep_{p} \in \{\pm 1\}$ we have
 \begin{equation}
 a(p^2n) \equiv 0 \pmod{\ell^{m}} \ \ \ \text{ if } \ \ \ \(\frac{n}{p}\)=\ep_{p}.
 \end{equation}
 \end{theorem}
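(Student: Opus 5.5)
The strategy mirrors the proof of \cite[Theorem 1.4]{AAD} and that of Theorem~\ref{thm:cong1}. We pass through the Shimura correspondence of \cite{AAD}, work with modular Galois representations modulo $\ell^m$, and choose $p$ with Frobenius in a prescribed Chebotarev class. The new feature is that we do not assume large image, so we use the hypothesis \eqref{Hasse} instead.

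\textbf{Step 1: reduction to integer weight.} We write $F$ modulo $\ell^m$ as a combination of Hecke eigen-components. We apply the Shimura lift $\Sh_t$ to $F\sl V_{24}$. By the precise correspondence, the images land in the spaces $S^{\new 2,3}_{2\lambda}(6N,\psi^2,\ep_{2,r,\psi},\ep_{3,r,\psi})$ (or the corresponding $\new 2$ space when $3\mid r$). For $\lambda=1$, the hypothesis $F\in S^c_{3/2}$ removes the theta-series obstruction. Compatibility with Hecke operators then reduces the problem to the following. Let $f_1,\dots,f_s$ be the finitely many newforms occurring, with $\ell$-adic representations $\rho_{f_i}$ reduced modulo a suitable power of $\lambda\mid\ell$. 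We must find a positive density set of primes $p\equiv -2\pmod{\ell^m}$ such that, for each $i$,
\[
T_{p^2}\text{ acts on }F\text{ modulo }\ell^m\text{ by the scalar }-\ep_p\, p^{\lambda-1}\psi(p)\cdots .
\]
The standard formula for $a(p^2n)$ under $T_{p^2}$ then gives the vanishing whenever $\pfrac np=\ep_p$. Equivalently, the relevant coefficient relation is that $a_{f_i}(p)\equiv 0\pmod{\ell^m}$ for all $i$, exactly as in \cite{AAD}.

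\textbf{Step 2: the Chebotarev condition.} We need $\sigma$ in the Galois group of the compositum of the splitting fields of the $\rho_{f_i}\bmod \ell^m$ and $\Q(\zeta_{\ell^m})$ satisfying two conditions:
\[
\chi_\ell(\sigma)\equiv -2\pmod{\ell^m},\qquad \operatorname{tr}\rho_{f_i}(\sigma)\equiv 0\pmod{\ell^m}\quad\text{for all } i.
\]
The idea, as in \cite{AAD}, is to use complex conjugation $c$ together with the element $\tau$ of $\Gal(\bar\Q/\Q)$ lying over $2$ that comes from \eqref{Hasse}. Since each $f_i$ is new at $2$ with $\psi^2(2)$ a unit, $\rho_{f_i}$ restricted to a decomposition group at $2$ is Steinberg up to twist. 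A Frobenius-type element at $2$ then acts with eigenvalues in ratio $2$, up to the character. Combining with $c$, whose image is conjugate to $\mathrm{diag}(1,-1)$, one aims for a product whose image has eigenvalues $\alpha,-\alpha$ and hence trace $0$, with cyclotomic character $-2^a\cdot$(power) $\equiv -2$.

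The main obstacle is that the Nebentypus $\psi^2$ is now non-real. The determinant $\det\rho_{f_i}=\psi^2\chi_\ell^{2\lambda-1}$ then varies with $i$, and the chosen elements must simultaneously kill all traces while the twists by $\psi$ at $p$ line up. To handle this, I would first restrict to $\sigma$ fixing the field cut out by $\psi$, so that $\psi(p)=1$. Then I would run the argument of \cite{AAD} inside $\Gal(\bar\Q/\Q(\psi))$. Since $\psi$ is unramified at $2$ and $\ell$, the local elements at $2$ and the complex conjugation can be chosen there, because $\Q(\psi)/\Q$ is disjoint from the relevant $\ell$-power cyclotomic extensions as $\ell\nmid N$.

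\textbf{Step 3: conclusion.} Chebotarev then gives a positive density set $S$ of such primes. I would also check that $\ep_p$, determined by $\pfrac{-1}{p}$, $\pfrac{-3}{p}$ and $\psi(p)=1$, is constant up to the sign choices made in Step 1, which yields the stated congruence.
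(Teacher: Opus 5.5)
There is a genuine gap: Step~1 reduces to the wrong Hecke condition, and Step~2 cannot produce even that one.

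What the endgame needs (Lemmas~\ref{5.1 analogue} and~\ref{5.2 analogue}) is that \emph{every} newform $f_i$ in the space satisfies $a_{f_i}(p)\equiv\beta p^{\lambda-1}\pmod{\ell^{m+c}}$ for one and the same unit $\beta$. Then $F\sl T_{p^2}\equiv\alpha_p p^{\lambda-1}F$. For $p\nmid n$, the middle term $\pfrac{-1}{p}^{\frac{r-1}{2}}\pfrac{12n}{p}\psi(p)p^{\lambda-1}a(n)$ of the $T_{p^2}$-formula then cancels $\alpha_p p^{\lambda-1}a(n)$ on exactly one class of $\pfrac{n}{p}$.

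Your condition $a_{f_i}(p)\equiv 0$ instead gives $F\sl T_{p^2}\equiv 0$, i.e.\ $a(p^2n)\equiv-\pfrac{-1}{p}^{\frac{r-1}{2}}\pfrac{12n}{p}\psi(p)p^{\lambda-1}a(n)$ for $p\nmid n$. That is not a vanishing statement. Trace zero only yields $a(p^3n)\equiv 0$ for $p\nmid n$, which is a different kind of congruence. It also contradicts your own claim in Step~1 that $T_{p^2}$ acts by a nonzero scalar.

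The proposed mechanism in Step~2 also fails. Complex conjugation $c$ and a Frobenius lift $\sigma_2$ at $2$ lie in different decomposition groups, so $\rho_{f_i}(c)$ and $\rho_{f_i}(\sigma_2)$ need not be simultaneously triangularizable. Nothing then controls $\operatorname{tr}\rho_{f_i}(c\sigma_2)$, let alone uniformly in $i$. Restricting to $G_{\Q(\psi)}$ is incompatible with working at $2$: $\psi$ is unramified there, so $\psi(\sigma_2^j)=\psi(2)^j$ is forced. The obstacle motivating that restriction is not there anyway. All $f_i$ have Nebentypus $\psi^2$, so $\det\rho_{f_i}$ does not depend on $i$.

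The missing idea is that a single local element at $2$ does everything, with no complex conjugation and no large-image input.
\begin{enumerate}
\item Let $\sigma_2\in G_2$ be a Frobenius lift and $\sigma=\sigma_2^{e}$ with $2^e\equiv-2\pmod{\ell^{m+c}}$ (e.g.\ $e=(a-1)\ell^{m+c-1}+1$).
\item Since $N$ is odd, $2\nmid C_{\psi^2}$. Part~(2) of Theorem~\ref{bigGalThm} then makes each $\rho_{f_i}|_{G_2}$ upper triangular with diagonal $\chi_\ell\psi_2,\psi_2$.
\item Here $\psi_2(\Frob_2)=a_{f_i}(2)=-\ep_{2,r,\psi}2^{\lambda-1}$ is independent of $i$, because every newform in the space has the prescribed Atkin--Lehner sign at $2$.
\item Hence $\chi_\ell(\sigma)\equiv-2$ and $\operatorname{tr}\rho_{f_i}(\sigma)=a_{f_i}(2)^e(1+2^e)\equiv-(-\ep_{2,r,\psi})^e(-2)^{\lambda-1}$ for all $i$.
\item Apply Chebotarev in the compositum of the fields cut out by the $\rho_{f_i}$ modulo a high power of $\lambda$, together with $\Q(\zeta_{N\ell^{m+c}})$. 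This gives a positive density set of $p\equiv-2\pmod{\ell^{m+c}}$ with $a_{f_i}(p)\equiv-(-\ep_{2,r,\psi})^e p^{\lambda-1}$ for all $i$.
\end{enumerate}
This is the argument behind Theorem~\ref{4.2 analogue}, after which Lemmas~\ref{5.1 analogue} and~\ref{5.2 analogue} finish the proof.

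Finally, the root of unity $\psi(2)^e$ inside $(-\ep_{2,r,\psi})^e$ equals $\psi(p)$ for such $p$. It cancels the $\psi(p)$ in the $T_{p^2}$-formula, so $\ep_p$ is a genuine sign without ever imposing $\psi(p)=1$.
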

\begin{remark}
The value of $\ep_{p}$ can be explicitly calculated using Theorem~\ref{4.2 analogue} below.  By a result of Hasse \cite{Hasse},  the proportion of primes satisfying \eqref{Hasse} is $17/24$.
\end{remark}

The main new input in our work is a result which roughly speaking shows that one can find $\sigma \in \Gal(\bar{\Q}/\Q(\zeta_\ell))$ for which the images of $\sigma$ under the modulo $\ell$ Galois representations which we consider are in the same conjugacy class. We mention a simple special case (the precise result is Proposition $3.5$).
\begin{proposition}\label{specialcase}
Let $N \in \Z^{+}$ and $\psi$ be a Dirichlet character modulo $N$. Assume that $f_1,...,f_s \in S_{k}(N,\psi)$ are normalized eigenforms such that for each $i \in \{1,\dots,s\}$, $\bar{\rho}_{f_{i}}(G_\Q)$ contains a conjugate of $\SL_2(\F_\ell)$.
 Then for every $\gamma \in \SL_2(\F_\ell)$, there exists $\sigma \in \Gal(\bar{\Q}/\Q(\zeta_{\ell}))$ such that, if $i \in \{1,...,s\}$, then $\bar{\rho}_{f_i}(\sigma^2)$ is conjugate to $\gamma^2$.
 \end{proposition}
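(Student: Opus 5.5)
The plan is to pass to a subfield over which every $\bar{\rho}_{f_i}$ has image inside $\SL_2$, identify the joint image there by a Goursat-type argument, and then use squaring to remove sign ambiguities. Let $K$ be the compositum of $\Q(\zeta_\ell)$ and the abelian field cut out by $\psi$. Since $\det \bar{\rho}_{f_i} = \psi\,\bar{\chi}_\ell^{\,k-1}$, where $\bar{\chi}_\ell$ is the mod $\ell$ cyclotomic character, each $\bar{\rho}_{f_i}$ maps $G_K$ into $\SL_2(\bar{\F}_\ell)$. The element $\sigma$ will be taken in $G_K \subseteq \Gal(\bar{\Q}/\Q(\zeta_\ell))$.

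\emph{Step 1: each image over $K$ is $\SL_2(\F_\ell)$.} Fix $i$ and let $H_i \supseteq g_i \SL_2(\F_\ell) g_i^{-1}$ be the hypothesised subgroup of $\bar{\rho}_{f_i}(G_\Q)$. Because $G_\Q/G_K$ is abelian, $\bar{\rho}_{f_i}(G_K)$ contains the commutator subgroup of $\bar{\rho}_{f_i}(G_\Q)$. That commutator subgroup contains $[H_i,H_i]$, and $[H_i,H_i] = H_i$ because $\SL_2(\F_\ell)$ is perfect for $\ell \ge 5$. Hence $\bar{\rho}_{f_i}(G_K) \supseteq g_i\SL_2(\F_\ell)g_i^{-1}$. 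I expect the reverse inclusion to follow from the determinant condition together with the fact that a normalizer-type argument confines the image (a subgroup of $\SL_2$ normalizing a conjugate of $\SL_2(\F_\ell)$ differs from it by scalars $\pm 1$ only). If it turns out that the image over $K$ is strictly larger, the argument below can be run inside $g_i\SL_2(\F_\ell)g_i^{-1}$ alone, since that is all it uses. After conjugating, assume $g_i = 1$.

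\emph{Step 2: joint projective image.} Consider
\[
\Phi : G_K \to \prod_{i=1}^{s} \PSL_2(\F_\ell), \qquad \tau \mapsto \bigl(\bar{\rho}_{f_i}(\tau) \bmod \pm1\bigr)_{i=1}^{s},
\]
whose image $P$ surjects onto each factor by Step 1. Since $\PSL_2(\F_\ell)$ is nonabelian simple for $\ell \ge 5$, Goursat's lemma, in the form used by Ribet, shows the following. The index set splits into classes such that $P$ is the product over the classes of graphs: within a class all coordinates are determined by the first one, via automorphisms $\alpha_{ij}$ of $\PSL_2(\F_\ell)$.

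Every automorphism of $\PSL_2(\F_\ell)$ is conjugation by an element of $\PGL_2(\F_\ell)$. The extra field automorphisms that occur for $\PSL_2(\F_q)$ with $q$ not prime are absent here because the field is the prime field $\F_\ell$. Choose $\tau \in G_K$ with $\Phi(\tau)$ equal, in each class, to the image of $\gamma$ in the first coordinate. Then $\bar{\rho}_{f_j}(\tau) = \pm h_j \gamma h_j^{-1}$ for some $h_j \in \GL_2(\F_\ell)$, for every $j$.

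\emph{Step 3: squaring.} Set $\sigma = \tau$. Then
\[
\bar{\rho}_{f_j}(\sigma^2) = \bigl(\pm h_j\gamma h_j^{-1}\bigr)^2 = h_j\gamma^2h_j^{-1},
\]
so the sign ambiguity disappears. This is exactly why the statement is phrased in terms of $\sigma^2$ and $\gamma^2$.

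I expect the main obstacle to be Step 2. One has to verify carefully that Goursat's lemma for products of several copies of a nonabelian simple group gives this product-of-graphs structure; this goes by induction on $s$, using that a normal subgroup of a product of simple groups is a product of some of the factors. One also has to track that conjugating by $h_j \in \GL_2(\F_\ell)$, rather than by an element of $\SL_2(\F_\ell)$, is harmless, which holds since the statement only asks for conjugacy in $\GL_2$.
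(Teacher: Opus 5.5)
You have a genuine gap in Steps 1--2. Your commutator argument that $\bar{\rho}_{f_i}(G_K)$ contains a conjugate of $\SL_2(\F_\ell)$ is correct. However, everything after it uses the stronger claim that, up to conjugation, $\bar{\rho}_{f_i}(G_K)\subseteq \SL_2(\F_\ell)$. That claim is what makes $\Phi$ land in $\prod_i \PSL_2(\F_\ell)$, and it is your only reason for excluding field automorphisms from the Goursat analysis. The hypothesis only says the image \emph{contains} a conjugate of $\SL_2(\F_\ell)$. By Dickson's classification the projective image of $\bar{\rho}_{f_i}$ is then $\PSL_2(\F_{q_i})$ or ${\rm PGL}_2(\F_{q_i})$ for some power $q_i$ of $\ell$, and nothing rules out $q_i>\ell$; the paper's $\F_i$ is an arbitrary finite extension of $\F_\ell$.

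Your normalizer heuristic does not close this. Nothing forces $\bar{\rho}_{f_i}(G_K)$ to normalize $g_i\SL_2(\F_\ell)g_i^{-1}$. Moreover, the normalizer of $\SL_2(\F_\ell)$ in $\SL_2(\bar{\F}_\ell)$ is the full preimage of ${\rm PGL}_2(\F_\ell)$, which contains $\SL_2(\F_\ell)$ with index $2$. So even when $q_i=\ell$, your $K$ does not guarantee that the projective image of $G_K$ is $\PSL_2$ rather than ${\rm PGL}_2$.

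The fallback of running the argument inside $g_i\SL_2(\F_\ell)g_i^{-1}$ also fails. The subgroup of $G_K$ mapping into $\prod_i g_i\SL_2(\F_\ell)g_i^{-1}$ need not surject onto any factor, so Goursat's lemma has nothing to act on. For instance, suppose $f_2$ is a Galois conjugate of $f_1$ with joint image $\{(x,{}^\tau x)\}\subseteq \SL_2(\F_{\ell^2})^2$. Take $g_1=1$ and $g_2\in \SL_2(\F_{\ell^2})$ not normalizing $\SL_2(\F_\ell)$. Then that subgroup maps onto the proper subgroup $\SL_2(\F_\ell)\cap g_2\SL_2(\F_\ell)g_2^{-1}$. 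Choosing the conjugates compatibly is exactly the content that is missing.

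The repair needs three things, and this is how the paper proceeds.
\begin{enumerate}
\item Pass to the at most quadratic extension $K_i$ over which the projective image becomes $\PSL_2(\F_{q_i})$. Then $\bar{\rho}_{f_i}(G_{K_i(\zeta_{C_\psi\ell})})$ is conjugate to $\SL_2(\F_{q_i})$, as in Lemma~\ref{technical}(1).
\item Run Goursat with factors $\PSL_2(\F_{q_i})$, where $\operatorname{Aut}(\PSL_2(\F_q))=\mathrm{P\Gamma L}_2(\F_q)$ includes field automorphisms. This is why the paper groups the forms by the relation $r_{f_i}\cong {}^\tau r_{f_j}$ with $\tau\in\Gal(\bar{\F}_\ell/\F_\ell)$ and applies Lemma~\ref{technical}(2) to a set of representatives.
\item Observe that ${}^\tau\gamma=\gamma$ because $\gamma$ has entries in $\F_\ell$, so the field twists do not move the conjugacy class of $\gamma$.
\end{enumerate}
The paper then controls the remaining sign by a determinant comparison showing $\varphi(\sigma)^2=1$ for the twisting character $\varphi$. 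You instead get the sign from the kernel $\{\pm 1\}$ of $\SL_2\to\PSL_2$. Either works, and your squaring step in Step 3 goes through once Step 2 is corrected.
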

 Ahlgren, Allen, and Tang proved a more precise version of Proposition~\ref{specialcase} when $\psi$ is trivial (see \cite[Proposition $3.8$]{AAT}). At crucial steps in their argument, they are confronted with the problem of identifying several Galois characters modulo $\ell$. By virtue of working in spaces with trivial Dirichlet character, the Galois characters are easy to identify: they are either trivial or $\omega_\ell^{(\ell-1)/2}$, where $\omega_\ell$ is the mod $\ell$ cyclotomic character.

The main obstruction to generalizing \cite[Proposition $3.8$]{AAT} is that the corresponding Galois characters which come up in the analysis are difficult to identify. The crucial insight which allows us to prove Proposition~\ref{nonuniformconj} is that we can prove the result ``up to a constant factor" without needing to know which Galois characters arise.  We are then able to leverage the theory of modular Galois representations to show that the constant factors which occur are extremely well-behaved. In fact, \cite[Proposition 3.8]{AAT} is true ``up to sign" in our context, which allows us to  recover the result for matrices $\gamma^2$, where $\gamma\in \SL_2(\F_\ell)$. This is sufficient for our arithmetic applications.

The paper is organized as follows. In Section $2$, we give background on modular forms and Galois representations. Section $3$ is the core of the paper; we prove preliminary results on Galois representations and congruences for newforms which we need for proving Theorem~\ref{thm:cong1}. This section also features sufficient conditions for suitability. 
Most of our work is devoted to building tools to prove the main technical result needed for proving Theorem~\ref{thm:cong1}. In particular, we need to prove that the Hecke operators act diagonally on the newforms in the spaces relevant for us  with eigenvalue $-1$ modulo arbitrary powers of $\ell$. Suitability is crucial for this result; it does not feature in the corresponding technical result needed for proving Theorem~\ref{thm:cong2}. 
In Section $4$, we prove Theorems~\ref{thm:cong1}-\ref{thm:cong2}. Our methods make crucial use of the Shimura correspondence developed in \cite{AAD} to translate our congruences for newforms into congruences for half-integral weight forms.
 
 \section{Background}
 We primarily follow the exposition in  \cite[$\mathsection$ $2$]{AAT} and \cite[$\mathsection$ $3$]{AAD}. Throughout this section, we assume that $\ell \geq 5$ is prime. For $z$ in the upper half-plane $\H$, we let $q=q(z)=e^{2 \pi i z}$.
 If $f$ is a function $\H$,  $k\in \frac12\Z$,  and $\gamma=\pmatrix abcd\in \GL_2^+(\R)$, then
we define

\[
\(f\|_k\gamma\)(z)=(\det\gamma)^\frac k2(cz+d)^{-k}f(\gamma z).
\]

Let $A \subseteq \C$ be a subring.
If $\nu$ is a multiplier system on $\Gamma_0(N)$, 
we denote by $M_{k}(N,\nu,A)$ (respectively $S_{k}(N,\nu,A)$)
the $A$-module of modular forms (respectively cusp forms) 
of weight $k$ and multiplier 
$\nu$ on $\Gamma_0(N)$ whose Fourier coefficients are in $A$.
When $\nu=1$ or $A$ is the subring of algebraic numbers that are integral at all of the primes above $\ell$, we omit them from the notation.
Forms in these spaces satisfy the transformation law
\[
f \sl_k \gamma= \nu(\gamma)f \ \ \ \text{ for } \ \ \ \gamma = \left(\begin{matrix}a & b \\c & d\end{matrix}\right) \in \Gamma_0(N)
\]
and the appropriate conditions at the cusps of $\Gamma_0(N)$. 

We define the eta function by
\[
\eta(z):= q^{\frac{1}{24}}\prod_{n=1}^{\infty}(1-q^{n}).
\]
The eta function has a multiplier $\nu_{\eta}$ satisfying

\[
\eta(\gamma z)=\nu_{\eta}(\gamma)(cz+d)^{\frac{1}{2}}\eta(z), \ \ \ \ \ \gamma= \left(\begin{matrix}a & b \\c & d\end{matrix}\right) \in \SL_{2}(\Z);
\]
throughout, we choose the principal branch of the square root. For $c > 0$,
we have the formula 
  \cite[~$\mathsection$$4.1$]{Knopp}

\begin{equation}\label{etamultiplier}
\nu_{\eta}(\gamma)=
 \begin{cases} 
 \(\frac{d}{c}\)e\(\frac{1}{24}((a+d)c-bd(c^2-1)-3c )\) & \text{if } c \text{ is odd,} \\
\(\frac{c}{d}\)e\(\frac{1}{24}((a+d)c-bd(c^2-1)+3d-3-3cd)\) & \text{if } c \text{ is even},\\
\nu_\eta(-\gamma)=i\nu_\eta(\gamma).
\end{cases}
\end{equation}

We next recall the $U$ and $V$ operators. For a positive integer $m$, we define them on Fourier expansions by
\begin{equation}
\(\sum_{n=1}^{\infty}a(n)q^{\frac{n}{24}}\) \sl U_{m}:= \sum_{n=1}^{\infty}a(mn)q^{\frac{n}{24}},
\end{equation}

\begin{equation}\label{Voperator}
\(\sum_{n=1}^{\infty}a(n)q^{\frac{n}{24}}\)\sl V_{m}:= \sum_{n=1}^{\infty}a(n)q^{\frac{mn}{24}}.
\end{equation}
If $p$ is a prime and $\psi$ is a  Dirichlet character modulo $N$, then we define the Hecke operator $T_p: S_k(N, \psi)\to S_k(N, \psi)$ via
\begin{equation}\label{eq:inthecke}
T_p=U_p+\psi(p)p^{k-1}V_p.
\end{equation}

If $p\mid N$ and $\psi$ is defined modulo $N/p$, then we denote by $S_k^{\new p}(N, \psi, \C)$ the orthogonal complement (with respect to the Peterson inner product) of the subspace in $S_k(N, \psi, \C)$ generated by 
$S_k\(N/p, \psi, \C\)$ and 
$S_k\(N/p, \psi, \C\)\|V_p$.
If $p\mid N$ and $(p, N/p)=1$, then    the Atkin-Lehner matrix $W^N_p$ is any integral matrix with 
\begin{equation}\label{eq:atkinlehner}
W^N_p =\pMatrix{p\alpha }\delta{N\beta}{p }, \ \ \ \ \alpha,\beta,\delta \in \Z, \ \ \ \ \det(W^N_p)=p.
\end{equation}
If $\chi$ is defined modulo $N/p$ 
then the operator $\|_kW^N_p$ preserves the space $M_k(N, \chi)$, and on this space is independent of the particular choice of matrix \cite[Lemma 2]{Li:1975}.

For a subring $A \subseteq \C$ and a multiplier system $\nu$ on $\Gamma_0(N)$, we set $S_k^{\new}(N,\nu,A) \subseteq S_k(N,\nu,A)$ and $S^{\new p}_k(N,\nu, A) \subseteq S_k(N,\nu, A)$ for the corresponding $A$-submodules of forms with coefficients in $A$ (and apply the same convention regarding algebraic numbers which are integral at all of the primes above $\ell$ and the trivial multiplier system).
Let $N \in \Z^+$ and $\psi$ be a Dirichlet character modulo $N$. Suppose that $f\in S^{\new p}_k(N,\psi)$ is a newform, by which we mean that there exists a level $N'$ with $p\mid N'$ and $N' \mid N$
such that 
$f\in S^{\new}_k(N',\psi)$ and $f$ is  a normalized eigenform of the operators $T_Q$ for $Q\nmid N'$ and $U_Q$ for $Q\mid N'$ (we refer to such a newform as a newform of level $N'$).
Then there is an Atkin-Lehner eigenvalue $\ep_p\in \{\pm 1\}$ such that
$f\sl_k W_p^N=f\sl_k W_p^{N'}=\ep_p f$; if $f=\sum a(n)q^n$ then $\ep_p=-p^{1-k/2}a(p)$ \cite[Corollary $3.2$]{AAD}.
Though we mainly consider newforms, many of the results we use and prove are true for normalized eigenforms in $S_k(N,\psi)$; more precisely, these are the normalized eigenforms of the operators $T_Q$ for $Q \nmid N$ and $U_Q$ for $Q \mid N$.

We recall the definition of the Shimura correspondence for the eta multiplier in \cite{AAD}.  Suppose that $F=\sum a(n)q^{n/24}\in S_{\lambda+1/2}(N, \psi\nu_\eta^{r})$,
where $\lambda\geq 1$;  if $\lambda=1$ suppose further that $F \in S^c_{3/2}(N,\psi\nu_\eta^r)$. For a Dirichlet character $\chi$ modulo $N$, let $L(s,\chi)$ denote the Dirichlet $L$-function.
If $t$ is a positive squarefree 
integer, the $t^{\operatorname{th}}$ 
lift is given by  $\Sh_t(F)=\sum 
b(n)q^n$, where the coefficients 
$b(n)$ are given by 
\begin{equation}\label{eq:shimlift}
\sum_{n=1}^\infty \frac{b(n)}{n^s} = L\(s-\lambda+1,\psi \ptfrac \bullet t\) \sum_{n=1}^\infty \pfrac{12}{n} \frac{a(tn^2)}{n^s}.
\end{equation}
We have $\Sh_t(F)\in S^{\text{new } 2,3}_{2\lambda}(6N,\psi^2,\ep_{2,r,\psi},\ep_{3,r,\psi})$ if $(r,6)=1$ and $\Sh_t(F)\in S^{\text{new } 2}_{2\lambda}(2N,\psi^2,\ep_{2,r,\psi})$ if $(r,6)=3$ \cite[Theorems $1.1-1.2$]{AAD}.
By a standard argument (see e.g. \cite[$(2.13)$]{Scarcity}, we see that
\begin{equation}\label{f0iffShimura0}
F \equiv 0 \pmod{\ell} \iff \Sh_{t}(F) \equiv 0 \pmod{\ell} \ \ \ \text{ for all positive, squarefree $t$}.
\end{equation}

For primes $p \geq 5$, we make use of Hecke operators $T_{p^2}$ for forms of half-integral weight. Suppose that  $(r, 6)=1$ and that  
\begin{equation}
		F(z) = \sum_{n\equiv r\spmod {24}} a(n) q^\frac n{24} \in  M_{\lambda+\frac12}(N,\psi\nu_\eta^r).
	\end{equation}
Then the action of $T_{p^2}$ is given by
\begin{equation}\label{eq:heckedef24}
F \sl T_{p^2}=\sum_{n\equiv r(24)} \(a(p^2n)+\pmfrac{-1}p^\frac{r-1}2\pmfrac{12n}p\psi(p)p^{\lambda-1}a(n)+\psi^2(p)p^{2\lambda-1}a\pmfrac n{p^2}\)q^\frac n{24}.
\end{equation}
These operators preserve the spaces of cusp forms.
Morever, if we set
\begin{equation}\label{chir}
 \chi^{(r)}= \begin{cases} 
\(\frac{-4}{\bullet}\) & \text{if } 3 \mid r, \\
\(\frac{12}{\bullet}\) & \text{if }  3\nmid r,
\end{cases}
\end{equation}
then we have
\begin{equation}\label{equivariance}
\Sh_{t}\(F\sl T_{p^{2}}\)=\chi^{(r)}(p)\Sh_{t}(F)\sl T_{p}.
\end{equation}

We now discuss modular Galois representations.  See \cite{Hida} and \cite{Edixhoven} for more details. 
We begin with some notation. Suppose that $\ell \geq 5$ is prime. Let $k$ be an even integer and $N \in \Z^{+}$ with $\ell \nmid N$. 
Let $\bar{\Q}$ be the algebraic closure of $\Q$ in $\C$. If $p$ is prime, let $\bar{\Q}_{p}$ be a fixed algebraic closure of $\Q_{p}$ and fix an embedding $\iota_{p}:\bar{\Q} \hookrightarrow \bar{\Q}_{p}$. 
 The embedding $\iota_{\ell}$ allows us to view the coefficients of forms in $S_{k}(N)$ as elements of $\bar{\Q}_{\ell}$, and for each prime $p$,
the embedding $\iota_{p}$ allows us to view $G_{p}:=\Gal(\bar{\Q}_{p}/\Q_{p})$ as a subgroup of $G_{\Q}:=\Gal(\bar{\Q}/\Q)$.
 If $I_{p} \subseteq G_{p}$ is the inertia subgroup, we denote arithmetic Frobenius in $G_{p}/I_{p}$ by $\text{Frob}_{p}$. 
 
 For any finite extension $K/\Q$, 
 we set $G_{K}:=\Gal(\bar{K}/K)$ and 
 denote by $\text{Frob}_{p} |_{K}$ the restrictions to $K$ of elements in $\text{Frob}_{p}$.
 For representations $\rho:G_{\Q} \rightarrow \GL_{2}(\bar{\Q}_{\ell})$ which are unramified at $p$ (which is to say that $I_{p} \subseteq \ker{\rho}$), the image of any $\sigma \in G_{p}$ under $\rho$ only depends on $\sigma I_{p} \in G_{p}/I_{p}$; 
  for any such representation, we denote by $\rho(\text{Frob}_{p})$ the image of any element in $\text{Frob}_{p}$ under $\rho$.

  We denote by $\chi_\ell:G_{\Q}\rightarrow \Z^{\times}_{\ell}$ and $\omega_\ell:G_{\Q}\rightarrow \mathbb{F}^{\times}_{\ell}$ the $\ell$-adic and mod $\ell$ cyclotomic characters, respectively.
 We let $\omega_{2},\omega'_{2}:I_{\ell}\rightarrow\mathbb{F}^{\times}_{\ell^{2}}$ denote Serre's fundamental characters of level $2$ (see \cite[~$\mathsection$$2.1$]{DukeSerre}).
 Both characters have order $\ell^{2}-1$, and we have 
$\omega^{\ell+1}_{2}=\omega'^{\ell+1}_{2}=\omega_\ell$. Finally, we mention that if $\psi$ is a Dirichlet character modulo $N$, we may regard it as a Galois character $\psi: \Gal(\bar{\Q}/\Q) \rightarrow \bar{\Z}^\times_\ell$ in the standard way (see e.g. \cite[Chapter $9$ $\mathsection$ $3$]{Diamond-Shurman}).

 The following theorem is due to Deligne, Fontaine, Langlands, Ribet, and Shimura (see \cite[Theorem 2.1]{AAT}). 
\begin{theorem}\label{bigGalThm}
Suppose that $\ell \geq 5$ is prime and that $\psi$ is a Dirichlet character modulo $N$ with conductor $C_\psi$.
Let $f=\sum a(n)q^{n} \in S_{k}(N,\psi)$ be a normalized eigenform. There is a continuous irreducible representation $\rho_{f}:G_{\Q} \rightarrow \GL_2(\bar{\Q}_\ell )$ with semisimple mod $\ell$ reduction $\bar{\rho}_{f}:G_{\Q} \rightarrow \GL_2(\bar{\mathbb{F}}_\ell )$ satisfying the following properties.
\begin{enumerate}
\item
If $p \nmid \ell N$, then $\rho_{f}$ is unramified at $p$ and the characteristic polynomial of $\rho_{f}(\Frob_{p})$ is $X^2-\iota_\ell (a(p))X+\psi(p)p^{k-1}$. This uniquely characterizes $\rho_f$.
\item
If $f \in S^{\operatorname{new }Q}_k(N)$, where $Q$ is a prime with $Q\mid\mid N$ then we have
\begin{equation}
\rho_{f}|_{G_{Q}} \cong \left(\begin{matrix}\chi_\ell  \psi_Q & * \\0 & \psi_Q \end{matrix}\right),
\end{equation}
where $\psi_Q:G_{Q} \rightarrow \bar{\Q}^{\times}_\ell $ is the unramified character with $\psi_Q({\Frob}_{Q})=\iota_\ell (a(Q))$. If we also have $Q \nmid C_\psi$, then $\rho_f | _{I_Q}$ is unipotent. 
If $Q \mid C_\psi$, we have  
\begin{equation}
\rho_{f}|_{I_{Q}} \cong \left(\begin{matrix}\psi & * \\0 & 1 \end{matrix}\right),
\end{equation}
where we have regarded $\psi$ as a Galois character.
\item
Assume that $2 \leq k \leq \ell+1$. 
\begin{itemize}
\item
If $\iota_\ell (a(\ell)) \in \bar{\Z}_\ell ^{\times}$, then $\rho_{f} |_{G_\ell }$ is reducible and we have
\begin{equation}
\rho_{f}|_{I_\ell } \cong \left(\begin{matrix}\chi_\ell ^{k-1} & * \\0 & 1 \end{matrix}\right).
\end{equation}
\item
If $\iota_\ell (a(\ell)) \not \in \bar{\Z}_\ell ^{\times}$, then $\bar{\rho}_{f} |_{G_\ell }$ is irreducible and $\bar{\rho}_{f}|_{I_\ell } \cong \omega^{k-1}_2 \oplus \omega'^{(k-1)}_2$.
\end{itemize}
\end{enumerate}
\end{theorem}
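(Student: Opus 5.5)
The plan is to assemble this theorem from the literature rather than prove it from scratch, checking for each part that the normalizations (arithmetic Frobenius, the embedding $\iota_\ell$, the convention $\rho_f(\Frob_p)$ having trace $a(p)$ and determinant $\psi(p)p^{k-1}$) match those of \cite{Hida} and \cite{Edixhoven}.

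\textbf{Part (1).} For $k\geq 2$ the representation $\rho_f$ is constructed by Deligne in the $\ell$-adic étale cohomology of the Kuga--Sato variety over $X_1(N)$, with the Eichler--Shimura construction sufficing when $k=2$; for $k=1$ one uses Deligne--Serre. Unramifiedness at $p\nmid \ell N$ and the characteristic polynomial $X^2-\iota_\ell(a(p))X+\psi(p)p^{k-1}$ follow from the Eichler--Shimura congruence relation. Irreducibility is Ribet's theorem. Uniqueness follows from three facts: Frobenius elements at $p\nmid \ell N$ are dense by Chebotarev, so traces determine the semisimplification; by Brauer--Nesbitt that semisimplification determines the representation; and $\rho_f$ is irreducible. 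For the mod $\ell$ reduction, choose a $G_\Q$-stable lattice (compactness), reduce it, and semisimplify. Again by Brauer--Nesbitt, $\bar\rho_f$ is independent of the lattice.

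\textbf{Part (2).} Here I would invoke local--global compatibility at $Q\neq \ell$ (Langlands, Carayol) and identify the local component $\pi_Q$ of the automorphic representation attached to $f$.
\begin{itemize}
\item If $Q\,\|\,N$, $f$ is new at $Q$, and $Q\nmid C_\psi$, then $\pi_Q$ is an unramified twist of Steinberg. The Weil--Deligne representation then has nontrivial monodromy, so $\rho_f|_{G_Q}$ is an extension of $\psi_Q$ by $\chi_\ell\psi_Q$, with inertia acting unipotently. The unramified character is pinned down by $\psi_Q(\Frob_Q)=a(Q)$, which is the $U_Q$-eigenvalue.
\item If $Q\mid C_\psi$ with $Q\,\|\,N$, then $\pi_Q$ is a ramified principal series $\pi(\mu_1,\mu_2)$ with $\mu_1$ unramified and $\mu_2|_{\Z_Q^\times}$ equal to the $Q$-part of $\psi$. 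Restricting to $I_Q$ gives the shape $\pmatrix{\psi}{*}{0}{1}$, with the unramified quotient again having Frobenius eigenvalue $a(Q)$.
\end{itemize}
I would double-check the ordering of the two characters against the conventions of \cite{AAT}.

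\textbf{Part (3).} This is the behavior at $\ell$ when $2\leq k\leq \ell+1$ and $\ell\nmid N$.
\begin{itemize}
\item In the ordinary case $\iota_\ell(a(\ell))\in\bar\Z_\ell^\times$, Deligne's theorem (proved in Wiles' paper on ordinary representations) gives $\rho_f|_{G_\ell}$ reducible, with unramified quotient on which $\Frob_\ell$ acts by the unit root of $X^2-a(\ell)X+\psi(\ell)\ell^{k-1}$. Since the determinant is $\psi\chi_\ell^{k-1}$ and $\psi$ is unramified at $\ell$, inertia acts on the sub by $\chi_\ell^{k-1}$.
\item In the non-ordinary case, Fontaine's theorem (see Edixhoven, \S 6, or Serre's Duke paper) shows that $\bar\rho_f|_{I_\ell}$ is tame and equal to $\omega_2^{k-1}\oplus\omega_2'^{\,k-1}$. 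This uses Fontaine--Laffaille theory, which needs the weight hypothesis $k\leq \ell+1$; irreducibility on $G_\ell$ follows because the level-2 characters do not extend to $G_\ell$.
\end{itemize}

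The main obstacle is part (2) in the case $Q\mid C_\psi$. Checking that ``new at $Q$'' with $Q\,\|\,N$ forces exactly the ramified principal series described above, and that the normalizations give $\psi$ rather than $\psi^{-1}$ on the subrepresentation, requires care with the conventions in Carayol's local--global compatibility.
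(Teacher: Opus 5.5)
Your proposal is correct and takes the same route as the paper. The paper gives no argument of its own; it attributes the theorem to Deligne, Fontaine, Langlands, Ribet and Shimura via \cite[Theorem 2.1]{AAT}, and points to \cite{Hida} and \cite{Edixhoven} for details.

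Two small remarks on part (2). First, your case split is the right reading of the statement. The $G_Q$-display with $\chi_\ell\psi_Q$ and $\psi_Q$ can only hold when $Q\nmid C_\psi$, since otherwise $\rho_f|_{I_Q}$ would be unipotent. Second, the $\psi$ versus $\psi^{-1}$ worry needs no care with Carayol's conventions. One inertia character is trivial, and $\det\rho_f|_{I_Q}=\psi|_{I_Q}$ because part (1) gives $\det\rho_f=\psi\chi_\ell^{k-1}$ and $\chi_\ell$ is unramified at $Q$.
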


\begin{remark}
The Galois representations depend on the choice of embedding $\iota_\ell :\bar{\Q} \hookrightarrow \bar{\Q}_\ell $, but we have suppressed this from the notation.
\end{remark}
We now introduce notation concerning group representations (see also \cite[$\mathsection$ $3$]{AAT}).
For $\tau \in \Gal(\bar{\F}_\ell/\F_\ell)$
and a homomorphism $\bar{\rho}:G_\Q \rightarrow \GL_2(\bar{\F}_\ell)$, we write ${}^\tau\bar{\rho}:G_\Q \rightarrow \GL_2(\bar{\F}_\ell)$ for the composite of $\bar\rho$ with the automorphism of $\GL_2(\bar{\F}_\ell)$ induced by $\tau$. 
We make a similar definition for a homomorphism $r:G_\Q \rightarrow \text{PGL}_2(\bar{\F}_\ell)$.
For two homomorphisms $r_1, r_2:G_\Q \rightarrow \text{PGL}_2(\bar{\F}_\ell)$, 
we write $r_1 \cong r_2$
if they are conjugate by an element of $\text{PGL}_2(\bar{\F}_\ell)$.
When matrices $A,B \in \GL_2(\bar{\F}_\ell)$ are conjugate by an element of $\GL_2(\bar{\F}_\ell)$,
we write $A \sim B$.
For a normalized eigenform $f \in S_k(N,\psi)$, we let $r_f:G_\Q \rightarrow \text{PGL}_2(\bar{\F}_\ell)$ be the composite of $\bar{\rho}_f:G_\Q \rightarrow \GL_2(\bar{\F}_\ell)$ with the natural projection $\GL_2(\bar{\F}_\ell) \rightarrow \text{PGL}_2(\bar{\F}_\ell)$.

Lastly, for a Galois field extension $L/K$ and a subgroup $H \subseteq \Gal(L/K)$, we denote by $L^H$ the subfield of $L$ which is fixed by $H$. 
 \section{Results on Modular Galois representations}

 \subsection{Preliminary results}
 Throughout this section, $\ell \geq 5$ is prime, $k$ is an even, positive integer and $N$ is a squarefree, positive integer.
 We begin by recording two useful consequences of Theorem $2.1$ which are useful for proving Proposition$~3.3$, which gives sufficient conditions for the condition of suitability mentioned in the introduction.
 
 The following result, an analogue of \cite[Lemma $3.1$]{AAT}, gives precise information on how mod $\ell$ modular Galois representations which are isomorphic up to a twist by a mod $\ell$ Galois character are related. When the weight is low enough, one can constrain the weights of the normalized eigenforms under our consideration.

 \begin{lemma}\label{3.1 analogue}
 Let $\psi$ be a Dirichlet character modulo $N$ which we regard as a Galois character.
 Let $f,g \in S_k(N,\psi)$ be normalized eigenforms such that $f=\sum a(n)q^n$ and $\bar{\rho}_g \cong \bar{\rho}_f \otimes\varphi$, where $\varphi: G_\Q\rightarrow\bar{\F}^\times_\ell$ is a nontrivial continuous character.
 Suppose further that $\ell \nmid \phi(N)$ and that $\psi(\sigma) \neq -1$ for all $\sigma \in G_\Q$. Then
 \begin{enumerate}
 \item
 $\varphi=\omega_\ell^{\frac{\ell-1}{2}}$.
 \item
 If $k \leq \ell+1$, then the following statements are true.
 \begin{itemize}
 \item
 If $\iota_\ell(a(\ell)) \in \bar{\Z}_\ell^\times$, then $k=\frac{\ell+1}{2}.$
 \item
  If $\iota_\ell(a(\ell)) \not \in \bar{\Z}_\ell^\times$, then $k=\frac{\ell+3}{2}.$
 \end{itemize}
 \end{enumerate}
 \end{lemma}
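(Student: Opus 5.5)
Compare determinants. Taking determinants in $\bar{\rho}_g \cong \bar{\rho}_f\otimes\varphi$ gives
\[
\bar\psi\,\omega_\ell^{k-1} = \bar\psi\,\omega_\ell^{k-1}\varphi^2 ,
\]
where $\bar\psi$ is the reduction of $\psi$. Hence $\varphi^2=1$, so $\varphi$ is a nontrivial quadratic character of $G_\Q$.

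\textbf{Step 1: $\varphi$ is unramified outside $\ell$.} Let $Q \mid N$ be a prime (recall $N$ is squarefree). We compare $\bar\rho_f|_{I_Q}$ and $\bar\rho_g|_{I_Q}$ using Theorem~\ref{bigGalThm}(2), or the principal-series/unramified descriptions when $f$ or $g$ is old at $Q$.
\begin{itemize}
\item In every case, the semisimplification of $\bar\rho_f|_{I_Q}$ is $\bar\psi_{(Q)}\oplus 1$, where $\psi_{(Q)}$ is the $Q$-part of $\psi$ restricted to inertia. The same holds for $\bar\rho_g|_{I_Q}$.
\item This holds because $\psi_{(Q)}$ factors through $(\Z/Q\Z)^\times$ and the determinant on $I_Q$ is $\bar\psi_{(Q)}$. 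Also, $\ell\nmid\phi(N)$ means $\ell\nmid Q-1$, so tame inertia at $Q$ acts through a quotient of order prime to $\ell$; hence the wild or unipotent parts are trivial after semisimplification.
\item Twisting by $\varphi$, the multiset $\{\bar\psi_{(Q)}\varphi,\ \varphi\}$ must equal $\{\bar\psi_{(Q)},1\}$ on $I_Q$. If $\varphi|_{I_Q}\ne 1$, then $\varphi|_{I_Q}=\bar\psi_{(Q)}$ and $\bar\psi_{(Q)}\varphi=1$. So $\bar\psi_{(Q)}=\varphi|_{I_Q}$ would be a nontrivial quadratic character, taking the value $-1$.
\end{itemize}
To rule this out, I lift the value $-1$ back to $\psi$. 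Since $\ell\nmid\phi(N)$, the order of $\psi$ is prime to $\ell$, so reduction mod $\ell$ is injective on the values of $\psi$. Thus $\psi$ itself would take the value $-1$ on some $\sigma\in I_Q$, contradicting the hypothesis. Hence $\varphi$ is unramified at every $Q\mid N$. It is unramified at $p\nmid N\ell$ since both representations are.

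\textbf{Step 2: identify $\varphi$.} A quadratic character of $G_\Q$ unramified outside $\ell$ corresponds to $\Q(\sqrt{\pm\ell})$. This field is $\Q(\sqrt{\ell^*})$, the unique quadratic subfield of $\Q(\zeta_\ell)$, and its character is $\omega_\ell^{(\ell-1)/2}$. Since $\varphi$ is nontrivial, $\varphi=\omega_\ell^{(\ell-1)/2}$, proving (1). This uses that $\Q$ has no unramified extensions and that only $\ell$ may ramify.

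\textbf{Step 3: part (2), restriction to $I_\ell$.} Apply Theorem~\ref{bigGalThm}(3) to both $f$ and $g$.

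Ordinary case, $\iota_\ell(a(\ell))$ a unit. The semisimplification of $\bar\rho_f|_{I_\ell}$ is $\omega_\ell^{k-1}\oplus 1$. After twisting, it becomes $\omega_\ell^{k-1+(\ell-1)/2}\oplus\omega_\ell^{(\ell-1)/2}$. This must match $\bar\rho_g|_{I_\ell}$.
\begin{itemize}
\item If $g$ is supersingular, $\bar\rho_g|_{I_\ell}$ involves level-2 characters of exponent $k-1$ with $2\le k\le\ell+1$. These are not level 1 unless $\ell+1\mid k-1$, which is impossible in that range. So $g$ is also ordinary.
\item Then $\{k-1+\tfrac{\ell-1}{2},\ \tfrac{\ell-1}{2}\}\equiv\{k-1,0\}\pmod{\ell-1}$. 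Since $\tfrac{\ell-1}{2}\not\equiv0$, we need $\tfrac{\ell-1}{2}\equiv k-1$. Together with $k-1+\tfrac{\ell-1}{2}\equiv 0$, this gives $k\equiv\tfrac{\ell+1}{2}\pmod{\ell-1}$. With $2\le k\le\ell+1$ and $k$ even, we get $k=\tfrac{\ell+1}{2}$.
\end{itemize}

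Supersingular case. Here $g$ is also supersingular: irreducibility on $G_\ell$ is preserved by the twist. Note $\omega_\ell^{(\ell-1)/2}=\omega_2^{(\ell^2-1)/2}$. So we need
\[
\{\omega_2^{k-1+(\ell^2-1)/2},\ \omega_2^{\ell(k-1)+(\ell^2-1)/2}\}=\{\omega_2^{k-1},\ \omega_2^{\ell(k-1)}\}.
\]
The first matching is impossible, so $(k-1)(\ell-1)\equiv\tfrac{\ell^2-1}{2}\pmod{\ell^2-1}$, i.e. $k-1\equiv\tfrac{\ell+1}{2}\pmod{\ell+1}$. This yields $k=\tfrac{\ell+3}{2}$ in the allowed range.

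\textbf{Main obstacle.} I expect the main obstacle to be Step 1: handling inertia at $Q\mid N$ uniformly when $f$ or $g$ is old at $Q$ or $Q\nmid C_\psi$. The key points there are the determinant and the order of $\psi$ being prime to $\ell$.
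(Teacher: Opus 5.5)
Your proof is correct and follows essentially the same route as the paper: determinants force $\varphi^2=1$; the description of inertia at $Q\mid N$ from Theorem~\ref{bigGalThm}(2), together with $\psi(\sigma)\neq -1$, shows $\varphi$ is unramified outside $\ell$ (you compare Jordan--H\"older factors where the paper compares traces, and you explicitly justify passing from $\bar\psi(\sigma)=-1$ to $\psi(\sigma)=-1$, which the paper leaves implicit); and part (2) is the comparison on $I_\ell$ via Theorem~\ref{bigGalThm}(3) that the paper defers to \cite{AAT}. One small correction: your aside that tame inertia at $Q$ acts through a quotient of order prime to $\ell$ is false in general, since a Steinberg-type unipotent part lives on the $\ell$-part of tame inertia. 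It is not needed, though, because the semisimplification $\bar\psi_{(Q)}\oplus 1$ comes directly from Theorem~\ref{bigGalThm}(2).
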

 \begin{proof}
 A standard argument (see e.g. the discussion before \cite[Definition $9.6.10$]{Diamond-Shurman}) shows that the character $\varphi$ has finite image with order coprime to $\ell$. If $\varphi$ is unramified, it must also factor through $\Gal(\Q(\zeta_{\ell^\infty})/\Q)$, where $\Q(\zeta_{\ell^\infty})$ is the $\ell$-adic cyclotomic extension.
 These two observations show that $\varphi$ factors through $\Gal(\Q(\zeta_\ell)/\Q)$.
 From part $(1)$ of Theorem~\ref{bigGalThm}, we also have
\[
\det \bar{\rho}_f=\omega^{k-1}_\ell=\det \bar{\rho}_g=\varphi^2\det \bar{\rho}_f,
\]
which means that $\varphi$ is quadratic.

Thus, to prove that $\varphi=\omega^{\frac{\ell-1}{2}}_\ell$,
we only need to show that $\varphi$ is unramified outside of $\ell$; this will show that $\varphi=\omega^{i}_\ell$ where $1 \leq i \leq \ell-2$, and we must then have $i=\frac{\ell-1}{2}$ since $\varphi$ is quadratic.
To this end, let $p \neq \ell$ be prime.
When $Q \nmid C_{\psi}$, parts $(1)$ and $(2)$ of Theorem~\ref{bigGalThm} show that
\begin{equation}
\bar{\rho}_{f}|_{I_{p}} \cong \left(\begin{matrix}1 & * \\0 & 1 \end{matrix}\right).
\end{equation}
The same conclusion holds for $\bar{\rho}_g |_{I_p}$, and it then follows from $\bar{\rho}_g \cong \bar{\rho}_f \otimes \varphi$ that $\varphi$ is unramified in this~case.

Assume that $Q \mid C_{\psi}$ and let $\sigma \in I_p$. Since we have $\bar{\rho}_g \cong \bar{\rho}_f \otimes \varphi$ 
it follows from part $(2)$ of 
Theorem $2.1$   
that 
$\operatorname{tr}(\bar{\rho}_g(\sigma))=\operatorname{tr}(\bar{\rho}_f(\sigma))$ and $\operatorname{tr}(\bar{\rho}_g(\sigma))=\varphi(\sigma)\operatorname{tr}(\bar{\rho}_f(\sigma))$. Since we have $\psi(\sigma) \neq -1$, part $(2)$ of Theorem~\ref{bigGalThm} also shows that $\varphi(\sigma) \neq -1$; the fact that $\varphi$ is quadratic further implies that $\varphi(\sigma)=1$. This completes the proof that $\varphi$ is unramified.

The proof of part $(2)$ follows as in the proof of \cite[Lemma $3.1$]{AAT} by using part $(3)$ of Theorem~\ref{bigGalThm} in a similar manner.
 \end{proof}
 The next lemma is an analogue of \cite[Lemma $3.2$]{AAT}. Since the proof follows as in their work using Lemma~\ref{3.1 analogue} in a similar fashion, we omit the details.
 \begin{lemma}\label{3.2 analogue}
 Let $\psi$ be a Dirichlet character modulo $N$ which we regard as a Galois character.
 Let $f=\sum a(n)q^n \in S_k(N,\psi)$ be a normalized  eigenform. Suppose that $\ell \nmid \phi(N)$ and that $\psi(\sigma) \neq -1$ for all $\sigma \in G_\Q$. Assume that $2 \leq k \leq \ell+1$ and that there is a prime $Q$  such that $Q \mid N$, $Q^{k-1} \not \equiv Q^{\pm 1} \pmod{\ell}$, and $f \in S_k^{\operatorname{new }Q}(N)$. Then the following statements hold.

 \begin{enumerate}
 \item
 $\bar{\rho}_f$ is irreducible.
 \item
 Assume that there is a quadratic extension $K/\Q$ such that $\bar{\rho}_f \sl_{G_K}$ is reducible. 
 Then $\bar{\rho}_f \cong \bar{\rho}_f \otimes \omega_\ell^{\frac{\ell-1}{2}}$ and
 \begin{itemize}
 \item 
 If $\iota_\ell(a(\ell)) \in \bar{\Z}^\times_\ell$, then $k=\frac{\ell+1}{2}$.
 \item
 If $\iota_\ell(a(\ell)) \not \in \bar{\Z}^\times_\ell$, then $k=\frac{\ell+3}{2}$.
 \end{itemize}
 \end{enumerate}
 \end{lemma}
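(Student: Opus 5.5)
The plan follows the proof of \cite[Lemma $3.2$]{AAT}, with Lemma~\ref{3.1 analogue} used wherever they use \cite[Lemma $3.1$]{AAT}.

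For part $(1)$, suppose that $\bar{\rho}_f$ is reducible, so its semisimplification is $\chi_1 \oplus \chi_2$ for characters $\chi_1,\chi_2:G_\Q\to\bar{\F}_\ell^\times$. The local information at $Q$ and at $\ell$ then forces a numerical coincidence.

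At $Q$, part $(2)$ of Theorem~\ref{bigGalThm} applies because $f\in S_k^{\new Q}(N)$. On $G_Q$ the semisimplification of $\bar\rho_f$ is $\omega_\ell\bar\psi_Q\oplus\bar\psi_Q$. When $Q\mid C_\psi$ there is an additional ramified twist by $\psi|_{I_Q}$, but in either case the eigenvalues of $\bar\rho_f(\Frob_Q)$ have ratio $Q^{\pm1}$. I would treat the case $Q\mid C_\psi$ carefully: there the inertia image is nontrivial, and I would restrict to a suitable lift of Frobenius in $G_Q$. Since $\chi_1,\chi_2$ are characters of $G_\Q$, this gives $\chi_1(\Frob_Q)/\chi_2(\Frob_Q)=Q^{\pm1}$ modulo the ramified part.

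At $\ell$, the hypothesis $2\le k\le \ell+1$ lets us use part $(3)$ of Theorem~\ref{bigGalThm}. In the supersingular case $\bar\rho_f|_{G_\ell}$ is irreducible, so $\bar\rho_f$ is irreducible, a contradiction. In the ordinary case the characters restricted to $I_\ell$ are $\{\omega_\ell^{k-1},1\}$.

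Next I would use that $\chi_1,\chi_2$ are unramified outside $\ell N$, with ramification at $p\mid N$ controlled by $\psi$ via part $(2)$ of Theorem~\ref{bigGalThm}. Then $\chi_1=\omega_\ell^{k-1}\alpha$ and $\chi_2=\beta$, where $\alpha,\beta$ have conductor dividing $N$ and $\alpha\beta=\bar\psi$ by the determinant. Evaluating at $\Frob_Q$ with $Q\nmid C_\alpha C_\beta$ gives $Q^{k-1}\alpha(Q)/\beta(Q)=Q^{\pm1}$. The hypothesis $\ell\nmid\phi(N)$, together with the fact that the unramified part at $Q$ is trivial on the $\psi$-part, is what I would use to kill $\alpha(Q)/\beta(Q)$, giving $Q^{k-1}\equiv Q^{\pm1}\pmod\ell$. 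This contradicts the hypothesis.

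For part $(2)$, let $K/\Q$ be quadratic with $\bar\rho_f|_{G_K}$ reducible, and let $\varepsilon$ be the quadratic character of $K$. Since $\bar\rho_f$ is irreducible by part $(1)$, Clifford theory shows that $\bar\rho_f$ is induced from a character of $G_K$. Hence $\bar\rho_f\cong\bar\rho_f\otimes\varepsilon$. Applying Lemma~\ref{3.1 analogue} with $g=f$ and $\varphi=\varepsilon$ (nontrivial) gives $\varepsilon=\omega_\ell^{\frac{\ell-1}{2}}$, together with the stated values of $k$ in the two cases for $\iota_\ell(a(\ell))$.

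The main obstacle is the bookkeeping in part $(1)$ when $Q\mid C_\psi$. There I must check that the ramified twist by $\psi$ at $Q$ does not obscure the relation $Q^{k-1}\equiv Q^{\pm1}$. I would handle this by comparing the restrictions of $\chi_1,\chi_2$ to $I_Q$, which are $\{\bar\psi|_{I_Q},1\}$, with the global conductor constraints, and then evaluating at a Frobenius lift on which $\bar\psi$ is trivial.
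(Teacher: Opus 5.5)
Your overall route is the one the paper intends: it defers to the proof of \cite[Lemma 3.2]{AAT} with Lemma~\ref{3.1 analogue} substituted. Your part (2) is correct once part (1) is known: irreducibility plus Clifford theory gives $\bar\rho_f\cong\bar\rho_f\otimes\varepsilon_K$, and then you apply Lemma~\ref{3.1 analogue} with $g=f$. But part (1) has a genuine gap exactly where you ``kill'' $(\alpha/\beta)(Q)$.

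Your reduction to $Q^{k-1}(\alpha/\beta)(Q)\equiv Q^{\pm1}$, with $\bar\rho_f^{\mathrm{ss}}\cong\omega_\ell^{k-1}\alpha\oplus\beta$ and $\alpha\beta=\bar\psi$, is fine. However, the inertia data in Theorem~\ref{bigGalThm}(2) only show that $\alpha=\prod_{p\in A}\bar\psi_p$ and $\beta=\prod_{p\in B}\bar\psi_p$ for some partition $A\sqcup B$ of the primes dividing $C_\psi$, where $\psi_p$ is the $p$-component of $\psi$. Nothing forces $(\alpha/\beta)(Q)=1$, and the tools you cite do not help:
\begin{itemize}
\item $\ell\nmid\phi(N)$ only makes reduction modulo $\ell$ injective on the values of $\psi$. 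That is what one needs in Lemma~\ref{3.1 analogue} to pass from $\psi\neq-1$ to $\bar\psi\neq-1$; it says nothing about $\psi(Q)$.
\item Since $\psi$ is unramified at $Q$, every Frobenius lift at $Q$ has $\bar\psi$-value $\bar\psi(Q)$, which need not be $1$. There is no lift ``on which $\bar\psi$ is trivial.''
\end{itemize}

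Concretely, take $N=14$, $Q=2$, $\ell=13$, $k=6$, and $\psi$ a cubic character modulo $7$. All hypotheses hold, and $2^5\equiv 6\not\equiv 2^{\pm1}\pmod{13}$. But $\bar\psi(2)$ is a primitive cube root of unity in $\F_{13}$, and for the appropriate ordering of $\{\alpha,\beta\}=\{\bar\psi,1\}$ one has $2^{5}(\alpha/\beta)(2)\equiv 2\pmod{13}$. So comparing eigenvalues at $Q$ gives no contradiction. This is exactly the congruence condition in level raising for Eisenstein series with character, so one should not expect the stated hypotheses alone to exclude reducibility.

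In \cite{AAT} the character is trivial, so $\alpha=\beta=1$ is automatic. The paper's appeal to that argument does not address this point either. To close part (1) you need an additional input, for example:
\begin{itemize}
\item the hypothesis $\gcd(\operatorname{ord}\psi,\ell-1)=1$, since then $(\alpha/\beta)(Q)=Q^{\pm1-(k-1)}\in\F_\ell^\times$ has order dividing this gcd, hence equals $1$; or
\item the condition $Q^{k-1}\not\equiv\zeta Q^{\pm1}$ for every $\zeta$ with $\zeta^{\operatorname{ord}\psi}=1$.
\end{itemize}

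Finally, the obstacle you singled out is not a real one. $S_k^{\new Q}(N,\psi)$ is only defined when $\psi$ is defined modulo $N/Q$, so $Q\nmid C_\psi$ throughout. If $Q\mid C_\psi$ were allowed, your claim that the eigenvalue ratio is still $Q^{\pm1}$ would be unjustified. In that case $\rho_f|_{G_Q}$ is the sum of an unramified and a ramified character, with no Steinberg-type relation forcing a ratio $Q^{\pm1}$.
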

 \subsection{Sufficient conditions for Suitability}
Here we show that suitability holds for many spaces of forms. (see also \cite[Proposition $3.3$]{AAT} and \cite[Proposition $7.2$]{AAD}).
\begin{proposition}\label{3.3 analogue}
Suppose that $r$ is an odd integer.
 Suppose that $N$ is a squarefree, odd, positive integer with $\ell \nmid N$. Assume that $3 \nmid N$ if $3 \nmid r$.
Let $\psi$ be a Dirichlet character modulo $N$. Let $k$ be an even, positive integer.
Then $(k,\ell)$ is suitable for every triple $(N,\psi,r)$ if the following conditions hold.
\begin{enumerate}
\item
$\psi(\sigma) \neq -1$ for all $\sigma \in G_\Q$.
\item 
$\ell \nmid \phi(N)$.
\item
$k \leq \ell+1$.
\item
$2^{k-1} \not \equiv 2^{\pm 1} \pmod\ell$.
\item
$k \neq \frac{\ell+1}2,
\frac{\ell+3}2$.
\item
$\frac{\ell+1}{(\ell+1,k-1)}, \frac{\ell-1}{(\ell-1,k-1)} \geq 6$.
\end{enumerate}
 When $\ell > 5k-4$, we always have conditions $(3)$, $(5)$ and $(6)$.
\end{proposition}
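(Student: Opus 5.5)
We follow the structure of \cite[Proposition $3.3$]{AAT}. Fix a newform $f \in S^{\new 2,3}_{k}(6N,\psi^2,\ep_{2,r,\psi},\ep_{3,r,\psi})$ (or in $S^{\new 2}_k(2N,\psi^2,\ep_{2,r,\psi})$ when $(r,6)=3$); its level $N'$ is squarefree and divisible by $2$. We apply Lemma~\ref{3.2 analogue} to $f$ with the character $\psi^2$ and $Q=2$. The hypotheses must be checked against $\psi^2$ rather than $\psi$. We have $\ell\nmid\phi(N')$, since $\phi(2)=1$, $\phi(3)=2$ and $\ell\nmid\phi(N)$. We also have $\psi^2(\sigma)\neq -1$: if $\psi^2(\sigma)=-1$, then $\psi(\sigma)$ would be a primitive fourth root of unity, so $\psi(\sigma^2)=-1$, contradicting condition $(1)$. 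Conditions $(3)$ and $(4)$ supply $2\le k\le \ell+1$ and $2^{k-1}\not\equiv 2^{\pm1}$, and $f$ is new at $2$. Lemma~\ref{3.2 analogue}$(1)$ then gives that $\bar\rho_f$ is irreducible. By part $(2)$ together with condition $(5)$, $\bar\rho_f|_{G_K}$ is irreducible for every quadratic $K/\Q$, so $\bar\rho_f$ is not induced from a quadratic field.

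Next we use Dickson's classification of subgroups of $\PGL_2(\bar\F_\ell)$. The projective image $G=r_f(G_\Q)$ is finite. Since $\bar\rho_f$ is irreducible, $G$ is not contained in a Borel subgroup. Since $\bar\rho_f$ is not induced, $G$ is not dihedral. So $G$ is either isomorphic to $A_4$, $S_4$ or $A_5$, or conjugate to $\PSL_2(\F_{\ell^m})$ or $\PGL_2(\F_{\ell^m})$.

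To rule out the exceptional cases, restrict to the inertia group $I_\ell$. By Theorem~\ref{bigGalThm}$(3)$, $\bar\rho_f|_{I_\ell}$ is either $\omega_\ell^{k-1}\oplus 1$ up to semisimplification, or $\omega_2^{k-1}\oplus\omega_2'^{(k-1)}$. In the first case the image of tame inertia in $\PGL_2$ is cyclic of order $\frac{\ell-1}{(\ell-1,k-1)}$. In the second case it has order $\frac{\ell+1}{(\ell+1,k-1)}$, since $\omega_2^{k-1}/\omega_2'^{(k-1)}=\omega_2^{(k-1)(1-\ell)}$. Condition $(6)$ makes this order at least $6$, while $A_4$, $S_4$ and $A_5$ have no elements of order exceeding $5$. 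This is the step requiring the most care: we must make sure the semisimplification in the ordinary case does not lose the order, and it does not, because the diagonal character already has that order.

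Hence $G$ contains a conjugate of $\PSL_2(\F_{\ell^m})\supseteq\PSL_2(\F_\ell)$. We lift this to $\SL_2(\F_\ell)\subseteq\bar\rho_f(G_\Q)$ up to conjugacy by the standard argument of \cite{AAT}. The preimage $H$ of $\PSL_2(\F_{\ell})$ in $\bar\rho_f(G_\Q)$ satisfies $H\subseteq \bar\F_\ell^\times\cdot\SL_2(\F_{\ell})$ after conjugation. Its commutator subgroup $[H,H]$ lies in $\SL_2(\F_\ell)$ and surjects onto $\PSL_2(\F_\ell)$, which is perfect for $\ell\ge5$. A subgroup of $\SL_2(\F_\ell)$ surjecting onto $\PSL_2(\F_\ell)$ is all of $\SL_2(\F_\ell)$, because $\SL_2(\F_\ell)$ is a nonsplit extension. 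Therefore $[H,H]=\SL_2(\F_\ell)$.

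Finally, when $\ell>5k-4$ we check the remaining conditions. We get $k\le\ell+1$ trivially. Since $\ell+1>5k-3>2k$, we have $k<\frac{\ell+1}{2}$, which gives $(5)$. Since $(\ell\pm1,k-1)\le k-1$, we get $\frac{\ell\pm1}{(\ell\pm1,k-1)}\ge\frac{\ell-1}{k-1}>\frac{5k-5}{k-1}=5$, which gives $(6)$.
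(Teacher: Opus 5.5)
Your proposal is correct and follows the paper's proof. You exclude the reducible and dihedral cases via Lemma~\ref{3.2 analogue} with conditions $(1)$, $(2)$, $(4)$, $(5)$, and you exclude the exceptional images using the order of the projective image of inertia at $\ell$ (Theorem~\ref{bigGalThm}$(3)$) together with condition $(6)$. Along the way you supply details the paper leaves implicit: you check the lemma's hypotheses for the actual character $\psi^2$ and level $N'$, and you pass from Dickson's classification to $\SL_2(\F_\ell)\subseteq\bar\rho_f(G_\Q)$ by a commutator argument, where the paper simply cites \cite[Theorem $2.47(b)$]{DDT97}.
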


\begin{proof}
The final assertion is easy to check.
We also assume that $(r,6)=1$; the proof is similar when $(r,6)=3$. Suppose
that $f=\sum a(n)q^{n} \in S^{\new 2,3}_{k}(6N,\psi^2,\ep_{2,r,\psi},\ep_{3,r,\psi})$ is a newform.
From \cite[Theorem $2.47(b)$]{DDT97}, we see that there are four possibilities for the image of $\bar{\rho}_{f}$:
\begin{enumerate}
\item
$\bar{\rho}_{f}$ is reducible.
\item
$\bar{\rho}_{f}$ is dihedral, i.e. $\bar{\rho}_{f}$ is irreducible but $\bar{\rho}_{f} \sl_{G_{K}}$ is reducible for some quadratic $K/\Q$.
\item
$\bar{\rho}_{f}$ is exceptional, i.e. the projective image of $\bar{\rho}_{f}$ is conjugate to one of $A_{4}$, $S_{4}$, or $A_{5}$.
\item
The image of $\bar{\rho}_{f}$ contains a conjugate of $\SL_2(\mathbb{F}_\ell )$.
\end{enumerate}
Our strategy is to rule out the first $3$ cases. 
By 
conditions $(1)$, $(2)$, and $(4)$ together with part $(1)$ of Lemma~\ref{3.2 analogue}, we see that $\bar{\rho}_{f}$ is irreducible. 
By condition $(5)$ and part $(2)$ of Lemma~\ref{3.2 analogue}, we conclude that $\bar{\rho}_{f}$ is not dihedral.

 To rule out the exceptional case, it suffices to show that the projective image contains an element of order $\geq 6$.
 Suppose that $\iota_\ell (a(\ell)) \in \bar{\Z}^{\times}_\ell $.
  By part $(3)$ of Theorem~\ref{bigGalThm}, we know that
  \begin{equation}
  \rho_{f}\sl_{I_\ell } \cong \pMatrix{\chi_\ell ^{k-1}}{*}{0}{1}.
  \end{equation}
Since $\omega_\ell $ has order $\ell-1$, the projective image of $\bar{\rho}_{f}$ contains an element of order $\geq \frac{\ell-1}{(\ell-1,k-1)} \geq 6$.
If $\iota_\ell (a(\ell)) \not \in \bar{\Z}^{\times}_\ell$, then part $(3)$ of Theorem~\ref{bigGalThm} implies that
  \begin{equation}
  \bar{\rho}_{f} \cong \pMatrix{\omega^{k-1}_2}{0}{0}{\omega'^{k-1}_2}.
  \end{equation}
  Since $\omega_2/\omega'_2$ has order $\ell+1$, we conclude by condition $(6)$ that the projective image of $\bar{\rho}_{f}$ contains an element of order $\frac{\ell+1}{(\ell+1,k-1)} \geq 6$. 
  This completes the proof.
\end{proof}
\subsection{Crucial ingredients for proving Theorem~\ref{thm:cong1}}. Here we collect the results necessary for proving the main technical result which is essential for the proof of Theorem~\ref{thm:cong1}. The most important result is Proposition~\ref{nonuniformconj}, which hinges on the next result, which roughly states that normalized eigenforms with sufficiently disjoint projective modulo $\ell$ Galois representations cut out similarly disjoint field extensions; it also generalizes \cite[Lemma $3.7$]{AAT} to spaces of forms with arbitrary character. 
 \begin{lemma}\label{technical}
 Assume that $N$ is a positive integer and that $\psi$ is a Dirichlet character modulo $N$ with conductor $C_\psi$.
Let $f_1,...,f_s \in S_{k}(N,\psi)$ be normalized eigenforms. Assume for each $i \in \{1,...,s\}$ that $\bar{\rho}_{f_{i}}(G_\Q)$ contains a conjugate of $\SL_2(\F_\ell)$, and let $M_i=\bar{\Q}^{\operatorname{ker}(\bar{\rho}_{f_i})}$. Then the following statements are true.
\begin{enumerate}
\item 
For each $i \in \{1,...,s\}$, there exists an extension $K_i/\Q$ of degree at most $2$ in $M_i$ and a finite extension $\F_{i}/\F_\ell$ such that $\bar{\rho}_{f_i}(G_{K_i(\zeta_{C_\psi\ell})})$ is conjugate to $\SL_2(\F_{i})$.
\item 
Further assume for each $\tau \in \Gal(\bar{\F}_\ell/\F_\ell)$ that $r_{f_i} \not \cong {}^\tau r_{f_j}$ for each $i,j \in \{1,...,s\}$ with $i \neq j$. Let $K_i$ and $\F_i$ be defined as in part (1), and  set $M:=M_1...M_s$ and $K:=K_1...K_s$. Then 
$\Gal(M(\zeta_{C_{\psi}\ell})/K(\zeta_{C_{\psi}\ell})) \cong \prod^{s}_{i=1} \SL_2(\F_{i})$.
\end{enumerate}
 \end{lemma}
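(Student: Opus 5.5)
Part (1) reduces to a structural statement about subgroups of $\GL_2(\bar{\F}_\ell)$, and part (2) to a Goursat-type argument on the product $\prod_{i=1}^s \SL_2(\F_i)$.

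\textbf{Part (1).} Fix $i$ and set $G=\bar{\rho}_{f_i}(G_\Q)$. After conjugating we may assume $\SL_2(\F_\ell)\subseteq G$. By Dickson's classification (as in \cite[Lemma 3.7]{AAT} and \cite[Theorem 2.47]{DDT97}), $G$ is a subgroup of $\GL_2(\bar{\F}_\ell)$ containing $\SL_2(\F_\ell)$. We will show that its derived subgroup is $\SL_2(\F_i)$ for a finite field $\F_i$ generated by traces, and that $G$ normalizes it. Indeed, $G\subseteq \bar{\F}_\ell^\times\cdot \GL_2(\F_i)$ for the field $\F_i$ generated over $\F_\ell$ by the ratios $\operatorname{tr}^2/\det$. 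The subgroup $\bar{\rho}_{f_i}(G_{\Q(\zeta_{C_\psi\ell})})$ consists of elements $g$ with $\det g=\omega_\ell^{k-1}\psi=1$, because $\det\bar{\rho}_{f_i}=\psi\omega_\ell^{k-1}$ is trivial on $G_{\Q(\zeta_{C_\psi\ell})}$. Hence this subgroup lies in $\SL_2(\bar{\F}_\ell)\cap \bar{\F}_\ell^\times \GL_2(\F_i)$, which is contained in $\{\pm1\}\cdot\SL_2(\F_i)$ up to scalars $\mu$ with $\mu^2\in\F_i^\times$. Its image is normal in $G$ with abelian quotient, so it contains $[G,G]=\SL_2(\F_i)$ (perfect, since $\ell\ge5$). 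The quotient by $\SL_2(\F_i)$ therefore has order at most $2$, and its kernel defines the field $K_i(\zeta_{C_\psi\ell})$. More precisely, consider the composite $G_{\Q(\zeta_{C_\psi\ell})}\to \bar{\rho}(\cdot)/\SL_2(\F_i)$, which has order $\le 2$. I would argue, as in AAT, that the projective image of $\bar\rho_{f_i}$ is $\operatorname{PSL}_2(\F_i)$ or $\operatorname{PGL}_2(\F_i)$. Let $K_i$ be the fixed field in $M_i$ of the preimage of $\operatorname{PSL}_2(\F_i)$, which has degree $\le 2$. On $G_{K_i(\zeta_{C_\psi\ell})}$ the image is then contained in $\{\pm1\}\SL_2(\F_i)=\SL_2(\F_i)$ and contains the commutator subgroup, giving equality.

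\textbf{Part (2).} The image of $\Gal(M(\zeta_{C_\psi\ell})/K(\zeta_{C_\psi\ell}))$ under $\prod_i\bar{\rho}_{f_i}$ is a subgroup $H\subseteq\prod_i \SL_2(\F_i)$ surjecting onto each factor. This uses part (1) and the fact that enlarging $K_i(\zeta_{C_\psi\ell})$ to $K(\zeta_{C_\psi\ell})$ is an abelian extension, hence does not shrink the perfect image. I would apply Goursat's lemma inductively, in the form of Ribet's lemma: it suffices to prove that the projection of $H$ to $\SL_2(\F_i)\times\SL_2(\F_j)$ is surjective for every $i\neq j$. If it is not, then since $\operatorname{PSL}_2(\F_i)$ is simple, the projection to $\operatorname{PSL}_2(\F_i)\times \operatorname{PSL}_2(\F_j)$ is the graph of an isomorphism $\alpha$. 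Every automorphism of $\operatorname{PSL}_2(\F)$ is conjugation in $\operatorname{PGL}_2$ composed with a field automorphism $\tau$. So $r_{f_j}\cong {}^\tau r_{f_i}$ on $G_{K(\zeta_{C_\psi\ell})}$. The final step is to extend this to $G_\Q$, and this is where I expect the main obstacle.

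To extend it, I would argue as follows. The restrictions of $r_{f_j}$ and ${}^\tau r_{f_i}$ agree on an open normal subgroup with solvable (abelian) quotient. For any $\sigma\in G_\Q$, conjugating by $\sigma$ shows that the intertwiner $A\in\operatorname{PGL}_2$ also intertwines ${}^\tau r_{f_i}(\sigma)\,\cdot\,{}^\tau r_{f_i}(\sigma)^{-1}$-conjugates. Since the centralizer of $\operatorname{PSL}_2(\F_i)$ in $\operatorname{PGL}_2(\bar\F_\ell)$ is trivial, it follows that $r_{f_j}(\sigma)=A\,{}^\tau r_{f_i}(\sigma)A^{-1}$ for all $\sigma$. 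This yields $r_{f_j}\cong{}^\tau r_{f_i}$ on all of $G_\Q$, contradicting the hypothesis, and completes the proof. Goursat then gives $H=\prod_i\SL_2(\F_i)$, which is the isomorphism claimed.
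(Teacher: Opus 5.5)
Your proposal is correct and follows essentially the paper's route. Part (1) is the same argument: the projective image is $\PSL_2(\F_i)$ or $\mathrm{PGL}_2(\F_i)$ by \cite[Theorem 2.47(b)]{DDT97}, $K_i$ is the fixed field of the preimage of $\PSL_2(\F_i)$, and one concludes from the determinant together with simplicity/perfectness. Your pairwise Goursat/Ribet argument for part (2) is the group-theoretic form of the paper's claims (a)--(c) combined with \cite[Lemma 3.6]{AAT} and induction. You also make explicit the step the paper leaves to \cite{AAT}, namely extending the projective isomorphism from the normal subgroup $G_{K(\zeta_{C_\psi\ell})}$ to $G_\Q$ via the trivial centralizer of $\PSL_2(\F_i)$ in $\mathrm{PGL}_2(\bar{\F}_\ell)$; the preliminary discussion of $\operatorname{tr}^2/\det$ in part (1) is superfluous and should be dropped.
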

\begin{proof}
For each $i \in \{1,\dots,s\}$, since $\bar{\rho}_{f_i}(G_\Q)$ contains a conjugate of $\SL_2(\F_\ell)$, we see from \cite[Theorem $2.47(b)$]{DDT97} that there exists a finite extension $\F_{i}/\F_\ell$ such that $r_{f_i}(G_\Q)$ is conjugate to either $\text{PSL}_2(\F_{i})$ or $\text{PGL}_2(\F_{i})$. 
Upon replacing $\bar{\rho}_{f_i}$ by a conjugate, we assume that $r_{f_i}(G_\Q)=\text{PSL}_2(\F_{i})$ or $\text{PGL}_2(\F_{i})$. Now define $K_i/\Q$ to be the extension of degree at most $2$ in $M_i$ such that $r_{f_i}(G_{K_i})=\text{PSL}_2(\F_{i})$. Note that $\text{PSL}_2(\F_{i})$ is a simple group since $\ell \geq 5$. Since $K_i(\zeta_{C_{\psi}\ell})/K_i$ is an abelian extension, we see that $r_{f_i}(G_{K_i(\zeta_{C_{\psi}\ell})})=\PSL_2(\F_{i})$. This fact and  part $(1)$ of Theorem~\ref{bigGalThm} imply that $\bar{\rho}_{f_i}(G_{K_i(\zeta_{C_{\psi}\ell})}) =\SL_2(\F_{i})$.

The proof of part $(2)$ is very similar to the proof of part  $(2)$ of \cite[Lemma 3.7]{AAT}, so we sketch the details. Let $L_i:=M_i^{\text{ker}(r_{f_i})}$. As in the proof of part $(2)$ of \cite[~Lemma $3.7$]{AAT}, the following three claims hold.

\begin{itemize}
\item[(a)] 
$\Gal(L_iK(\zeta_{C_\psi\ell})/K(\zeta_{C_\psi\ell})) \cong \text{PSL}_2(\F_{i})$.
\item[(b)]
$\Gal(M_iK(\zeta_{C_\psi\ell})/K(\zeta_{C_\psi\ell})) \cong \text{SL}_2(\F_{i})$.
\item[(c)]
For $i,j \in \{1,\dots,s\}$ with $i \neq j$, we have $L_iK(\zeta_{C_\psi\ell}) \cap L_jK(\zeta_{C_\psi\ell})=K(\zeta_{C_\psi\ell})$.
\end{itemize}

The result follows by applying the above three claims and \cite[Lemma $3.6$]{AAT} as in the proof of \cite[Lemma $3.7$]{AAT}. More specifically, these tools allow us to prove by induction on $j \in \{1,\dots,s\}$ that
\[
\Gal(M_1\cdots M_jK(\zeta_{C_\psi\ell})/K(\zeta_{C_\psi\ell})) \cong \prod^j_{i=1} \Gal(M_iK(\zeta_{C_\psi\ell})/K(\zeta_{C_\psi\ell})) \cong \prod^j_{i=1} \SL_2(\F_{i}).
\]
\end{proof}
The next result, a generalization of \cite[Proposition $3.8$]{AAT}, is the crucial new ingredient which allows us to prove Theorem~\ref{thm:cong1}.
 \begin{proposition}\label{nonuniformconj}
 Suppose that $N \in \Z^+$ and that $\psi$ is a Dirichlet character modulo $N$.
 Let $f_1,...,f_s \in S_{k}(N,\psi)$ be normalized eigenforms such that for each $i \in \{1,\dots,s\}$, $\bar{\rho}_{f_{i}}(G_\Q)$ contains a conjugate of $\SL_2(\F_\ell)$.
 Then for every $\gamma \in \SL_2(\F_\ell)$, there exists $\sigma \in \Gal(\bar{\Q}/\Q(\zeta_{\ell}))$ such that for all $i \in \{1,...,s\}$, we have either $\bar{\rho}_{f_i}(\sigma)\thicksim \gamma$ or  $\bar{\rho}_{f_i}(\sigma) \thicksim -\gamma$. In particular, we have $\bar{\rho}_{f_i}(\sigma^2)\thicksim \gamma^2$.
 \end{proposition}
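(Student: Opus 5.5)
We prove the proposition by reducing to Lemma~\ref{technical} and then settling a scalar ambiguity.

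\emph{Step 1: group the eigenforms.} Partition $\{f_1,\dots,f_s\}$ into classes, putting $f_i$ and $f_j$ in the same class when $r_{f_i}\cong {}^\tau r_{f_j}$ for some $\tau\in\Gal(\bar{\F}_\ell/\F_\ell)$. Choose one representative from each class. The representatives satisfy the hypothesis of part (2) of Lemma~\ref{technical}, so with $K$ and $M$ as there,
\[
\Gal(M(\zeta_{C_\psi\ell})/K(\zeta_{C_\psi\ell}))\cong\prod_i \SL_2(\F_i),
\]
where $i$ runs over the representatives.

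\emph{Step 2: choose $\sigma$ for the representatives.} Since $\gamma\in\SL_2(\F_\ell)\subseteq \SL_2(\F_i)$, the product decomposition gives $\sigma\in G_{K(\zeta_{C_\psi\ell})}\subseteq \Gal(\bar{\Q}/\Q(\zeta_\ell))$ with $\bar\rho_{f_i}(\sigma)=\gamma$ for every representative $f_i$, after the conjugations fixed in Lemma~\ref{technical}. Hence $\bar\rho_{f_i}(\sigma)\sim\gamma$ for the representatives.

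\emph{Step 3: transfer to the other members of a class (main obstacle).} Let $f_j$ lie in the class of the representative $f_i$. Then $r_{f_j}\cong{}^\tau r_{f_i}$, so there is a character $\varphi:G_\Q\to\bar{\F}_\ell^\times$ with $\bar\rho_{f_j}\cong{}^\tau\bar\rho_{f_i}\otimes\varphi$. This gives
\[
\bar\rho_{f_j}(\sigma)\sim \varphi(\sigma)\,{}^\tau\gamma=\varphi(\sigma)\gamma,
\]
using that $\gamma$ has entries in $\F_\ell$, which $\tau$ fixes. This is the ``up to a constant factor'' statement; it remains to show $\varphi(\sigma)=\pm1$.

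Compare determinants. Both representations have determinant $\psi\,\omega_\ell^{k-1}$ by part (1) of Theorem~\ref{bigGalThm} and Chebotarev, and $\det({}^\tau\bar\rho_{f_i})={}^\tau\psi\,\omega_\ell^{k-1}$. Since $\sigma$ fixes $\zeta_{C_\psi\ell}$, we have $\psi(\sigma)={}^\tau\psi(\sigma)=\omega_\ell(\sigma)=1$. Therefore
\[
1=\det\bar\rho_{f_j}(\sigma)=\varphi(\sigma)^2\det\gamma=\varphi(\sigma)^2,
\]
so $\varphi(\sigma)=\pm1$, which proves the first assertion.

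The delicate point is making sure $\sigma$ lies in $G_{\Q(\zeta_{C_\psi\ell})}$. Otherwise $\psi(\sigma)$ and ${}^\tau\psi(\sigma)$ need not agree, and the determinant comparison breaks down. This is exactly why Lemma~\ref{technical} adjoins $\zeta_{C_\psi\ell}$ rather than only $\zeta_\ell$.

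\emph{Step 4: squares.} Since $(\pm\gamma)^2=\gamma^2$, we get $\bar\rho_{f_i}(\sigma^2)\sim\gamma^2$ for every $i$.
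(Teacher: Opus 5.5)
Your proposal is correct and follows essentially the same route as the paper. You choose representatives for the relation $r_{f_i}\cong {}^\tau r_{f_j}$, apply Lemma~\ref{technical} to them to get $\sigma\in G_{K(\zeta_{C_\psi\ell})}$ with $\bar\rho_{g}(\sigma)\sim\gamma$, and compare determinants $\psi\,\omega_\ell^{k-1}$ to force $\varphi(\sigma)^2=1$; your tracking of ${}^\tau\gamma=\gamma$ and ${}^\tau\psi$ is a bit more careful than the paper's.

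One small correction to your closing remark: adjoining $\zeta_{C_\psi}$ is not what protects the determinant comparison. All the $f_i$ share the nebentypus $\psi$, so $\bar\rho_{g}(\sigma)\sim\gamma$ together with $\omega_\ell(\sigma)=1$ already forces the reduction of $\psi(\sigma)$ to be $1$. Its real role in Lemma~\ref{technical} is to put the images inside $\SL_2(\F_i)$ and so yield the product decomposition, which is what produces a simultaneous $\sigma$ at all.
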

\begin{proof}
Let $S=\{f_1,\dots,f_s\}$.
For $f,g \in S$, we write $f \simeq g$ if there exists $\tau \in \Gal(\bar{\F}_\ell/\F_\ell)$ such that $r_{f_i} \cong {}^\tau r_{f_j}$. 
Let $S'=\{g_1,\dots,g_t\}$ be a complete set of representatives for $\simeq$.
It then follows from Lemma~\ref{technical} that there exists $\sigma \in \Gal(\bar{\Q}/\Q(\zeta_{C_{\psi}\ell}))$ such that $\bar{\rho}_g(\sigma) \thicksim \gamma$ for all $g \in S'$.

Now let $f \in S$. 
By definition, there exists $g \in S'$ such that $f \simeq g$. 
Thus, for some $\tau \in \Gal(\bar{\F}_\ell/\F_\ell)$ we can define a character $\varphi: G_\Q \rightarrow \bar{\F}^\times_\ell$ via $\varphi(\sigma)=\bar{\rho}_f(\sigma){}^\tau\bar{\rho}_g(\sigma)^{-1}$. 
It follows from $\bar{\rho}_g(\sigma) \thicksim \gamma$ and part $(1)$ of
Theorem~\ref{bigGalThm} that $\det \bar{\rho}_{g}(\sigma)=\det \bar{\rho}_f(\sigma)=\varphi(\sigma)^2 \det \bar{\rho}_g(\sigma)$. 
Thus, we have $\varphi(\sigma)^2=1$ and the result follows.
\end{proof}

\subsection{Main technical results}
 We now prove the main technical result which is used in the proof of Theorem~\ref{thm:cong1}.
\begin{theorem}\label{3.9 analogue}
 Suppose that  $\ell \geq 5$ is prime and that $r$ is an odd integer.
 Suppose that $N$ is a squarefree, odd, positive integer with $\ell \nmid N$. Assume that $3 \nmid N$ if $3 \nmid r$. Let $\psi$ be a Dirichlet character modulo $N$. Let $k$ be an even positive integer such that $(k,\ell)$ is suitable for $(N,\psi,r)$ and 
 let $m \geq 1$ be an integer.
  
  If $(r,6)=1$, then there exists a positive density set $S$ of primes such that if $p \in S$, then $p \equiv 1 \pmod{\ell^{m}}$ and $f \sl T_p \equiv -f \pmod{\ell^{m}}$ for each newform $f \in S^{\new 2,3}_{k}(6N,\psi^2,\ep_{2,r,\psi},\ep_{3,r,\psi})$. If $(r,6)=3$, then we have the same result for $S^{\new 2}_{k}(2N,\psi^2,\ep_{2,r,\psi})$. 
 \end{theorem}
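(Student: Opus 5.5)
We follow the strategy of \cite[Theorem $3.9$]{AAT}, with Proposition~\ref{nonuniformconj} replacing \cite[Proposition $3.8$]{AAT}. We treat the case $(r,6)=1$; the case $(r,6)=3$ is identical. Put $\mathcal{S}=S^{\new 2,3}_{k}(6N,\psi^2,\ep_{2,r,\psi},\ep_{3,r,\psi})$ and let $f_1,\dots,f_s$ be its newforms. Every newform $f_i$ of level $N_i\mid 6N$ lies in $S_k(6N,\psi^2)$ and is an eigenform of $T_p$ for all $p\nmid 6N\ell$. It therefore suffices to find a positive density set of primes $p$ with $p\equiv 1\pmod{\ell^m}$ and $\iota_\ell(a_i(p))\equiv -1\pmod{\ell^m}$ for every $i$. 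Then $f_i\sl T_p=a_i(p)f_i\equiv -f_i\pmod{\ell^m}$, since the coefficients of $f_i$ are $\ell$-integral.

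\emph{Step 1 (mod $\ell$).} By suitability, each $\bar\rho_{f_i}(G_\Q)$ contains a conjugate of $\SL_2(\F_\ell)$. Take
\[
\gamma_0=\pMatrix{-1}{1}{0}{-1}\in\SL_2(\F_\ell).
\]
We want a square $\gamma^2$ with trace $-2$, determinant $1$, and nontrivial unipotent part. Since $\gamma_0=\gamma^2$ for $\gamma=\pmatrix{i}{*}{0}{i}$ only if $-1$ is a square, we instead use $\gamma=\pMatrix{0}{1}{-1}{\,t}$ chosen so that $\gamma^2$ has trace $t^2-2=-2$. This forces $t=0$, which gives $\gamma^2=-I$.

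A scalar $-I$ is enough for a congruence mod $\ell$. It may not be enough to lift to $\ell^m$, however, so we choose the target in the $\ell$-adic image instead. Apply Proposition~\ref{nonuniformconj} with $\gamma=\pmatrix{0}{1}{-1}{0}$. This gives $\sigma_0\in G_{\Q(\zeta_\ell)}$ with $\bar\rho_{f_i}(\sigma_0^2)=-I$ for all $i$, so $\operatorname{tr}\bar\rho_{f_i}(\sigma_0^2)=-2$.

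\emph{Step 2 (lifting to $\ell^m$).} Let $\rho=\prod_i\rho_{f_i}\times\chi_\ell$. It is valued in $\prod_i\GL_2(\mathcal{O})\times\Z_\ell^\times$ for a suitable finite extension $\mathcal{O}$ of $\Z_\ell$, and its reduction modulo $\ell^m$ has finite image. Put $\tau=\sigma_0^2$. Then $\chi_\ell(\tau)\equiv 1\pmod\ell$ and $\rho_{f_i}(\tau)\equiv -I\pmod{\lambda}$. Raise $\tau$ to the power $\ell^{M}$ for $M$ large. The matrices $-\rho_{f_i}(\tau)$ lie in $1+\lambda M_2(\mathcal{O})$, a pro-$\ell$ group, so $(-\rho_{f_i}(\tau))^{\ell^M}\to I$. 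Since $\ell^M$ is odd, $\rho_{f_i}(\tau^{\ell^M})\equiv -I\pmod{\ell^m}$ for $M$ large. Likewise $\chi_\ell(\tau^{\ell^M})\equiv 1\pmod{\ell^m}$.

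Now let $H$ be the finite image of $G_\Q$ under $\rho\bmod \ell^m$. The element $h=\rho(\tau^{\ell^M})\bmod\ell^m$ gives a nonempty conjugation-stable subset $C=\{h\}^H$ of $H$. By the Chebotarev density theorem, the primes $p\nmid 6N\ell$ with $\rho(\Frob_p)\bmod\ell^m\in C$ have positive density. For these primes $p\equiv\chi_\ell(\Frob_p)\equiv1\pmod{\ell^m}$ and $a_i(p)=\operatorname{tr}\rho_{f_i}(\Frob_p)\equiv-2\pmod{\ell^m}$.

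\emph{Main obstacle.} The step above produces $a_i(p)\equiv -2$, not $-1$. So the real difficulty is choosing the target correctly, and I expect this to be the hardest part. We need $\operatorname{tr}\rho_{f_i}(\Frob_p)\equiv-1$ and $\det\equiv\psi^2(p)p^{k-1}\equiv\psi^2(p)\pmod{\ell^m}$. Because $\psi^2(\sigma)$ need not be trivial on $G_{\Q(\zeta_\ell)}$, we first pass to $G_{\Q(\zeta_{N\ell})}$; Lemma~\ref{technical} already works over $\Q(\zeta_{C_\psi\ell})$. Then $\det=1$, and the target is an element of $\SL_2$ of trace $-1$, i.e.\ of order $3$.

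Take $\gamma\in\SL_2(\F_\ell)$ of order $3$, which has trace $-1$. By Proposition~\ref{nonuniformconj}, $\bar\rho_{f_i}(\sigma)\sim\pm\gamma$. Replacing $\sigma$ by $\sigma^{4}$ kills the sign ambiguity: $(\pm\gamma)^4=\gamma^4=\gamma$, so each $\bar\rho_{f_i}(\sigma^4)\sim\gamma$ has trace $-1$.

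To make the trace congruence hold modulo $\ell^m$ and not just modulo $\ell$, replace $\tau=\sigma^4$ by $\tau^{\ell^{M}}$ for $M$ large. This is the lifting step. The point is that $\rho_{f_i}(\tau)$ has semisimple order-$3$ reduction, so its $\ell^{M}$-th powers converge to the Teichmüller-type lift of order $3$. That lift has trace exactly $\zeta_3+\zeta_3^{-1}=-1$, and we also take $M$ large enough that $\chi_\ell(\tau^{\ell^M})\equiv 1\pmod{\ell^m}$. Running the Chebotarev argument of Step 2 with this element gives a positive density set of primes $p\equiv1\pmod{\ell^m}$ with $a_i(p)\equiv-1\pmod{\ell^m}$ for all $i$. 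That is the theorem.
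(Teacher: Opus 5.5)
Your final argument (the ``Main obstacle'' paragraph) is correct and is essentially the paper's proof: you get an order-$3$ target of trace $-1$ from Proposition~\ref{nonuniformconj} and remove the sign ambiguity by a power of $\sigma$ (your $\sigma^4$ with $\gamma$ of order $3$, versus the paper's $\sigma^2$ with $\gamma=\pmatrix{1}{1}{-1}{0}$), you raise to an $\ell$-power so that the characteristic polynomial is $x^2+x+1$ to high $\lambda$-adic precision (the powers themselves approach the order-$3$ lift and its inverse alternately when $\ell\equiv 2\pmod 3$, but the trace tends to $-1$ either way), and you finish with Chebotarev applied jointly with $\chi_\ell$; Steps~1--2, aimed at $-I$, are a dead end and should be deleted. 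The only part of the paper's proof you do not reproduce is its closing appeal to \cite[Theorem~7.3]{AAD}, using a Galois-stable basis of the space with integral coefficients, which upgrades the congruence modulo powers of the single prime $\lambda$ determined by $\iota_\ell$ (all that your reduction to $\iota_\ell(a_i(p))\equiv -1$ gives) to a congruence modulo $\ell^m$; if the statement is meant at every prime above $\ell$, you need that step as well.
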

 \begin{proof}
 We assume that $(r,6)=1$; we omit the details when $(r,6)=3$ since the proof is nearly identical in that case.
 Choose a number field $K$ which contains all of the coefficients of all of the newforms $f \in S^{\new 2,3}_{k}(6N,\psi^2,\ep_{2,r,\psi},\ep_{3,r,\psi})$. 
 Recall that we have fixed an embedding $\iota_\ell: \bar{\Q}\rightarrow \bar{\Q}_\ell$; if $\lambda$ is the prime of $K$ induced by $\iota_\ell$, then let $K_\lambda$ be the completion of $K$ at $\lambda$ with ring of integers $\mathcal{O}_\lambda$ and residue field $\mathbb{F}=\mathcal{O}_\lambda/\lambda$. Then for any newform $f \in S^{\new 2,3}_{k}(6N,\psi^2,\ep_{2,r,\psi},\ep_{3,r,\psi})$, the Galois representations $\rho_f$ and $\bar{\rho}_f$ of Theorem~\ref{bigGalThm} are defined over $\mathcal{O}_\lambda$ and $\mathbb{F}$, respectively.

 By the definition of suitability and Proposition~\ref{nonuniformconj},
 there exists $\sigma \in \Gal(\bar{\Q}/\Q(\zeta_{C_\psi\ell}))$ such that $\bar{\rho}_f(\sigma)$ $\thicksim$ $\pMatrix{1}{1}{-1}{0}^2=\pMatrix{0}{1}{-1}{-1}$ for every newform $f \in S^{\operatorname{new} 2,3}_k(6N, \psi^2,\epsilon_{2,r,\psi},\epsilon_{3,r,\psi})$. 
 Thus, the characteristic polynomial of $\rho_f(\sigma)$ is congruent to $x^2+x+1 \pmod{\lambda}$.
 If $w$ is a positive integer, we argue as in the proof of \cite[Theorem $1.5$]{AAT} to see that the characteristic polynomial of $\rho_f(\sigma^{\ell^{w-1}})$ is congruent to $x^2+x+1 \pmod{\lambda^w}$. We now apply the Chebotarev density theorem and Theorem~\ref{bigGalThm} as in the proof of \cite[Theorem $1.5$]{AAT}. More precisely, we conclude that for every positive integer $w$,
 there exists a positive density set $S_w$ of primes such that if $p \in S_w$, then $p \equiv 1 \pmod{\ell^w}$ and we have
 \[
 f \sl T_p \equiv -f \pmod{\lambda^w}
 \]
 for each newform $f \in S^{\new 2,3}_{k}(6N,\psi^2,\ep_{2,r,\psi},\ep_{3,r,\psi})$.

 The rest of the proof will follow as in the proof of \cite[Theorem $7.3$]{AAD} once we know that $S^{\operatorname{new} 2,3}_{k}(6N,\psi^2,\ep_{2,r,\psi},\ep_{3,r,\psi})$ has a basis with integer coefficients. To prove this claim, let $\{f_1,\dots,f_s\}$ be the set of newforms in this space. 
 By arguing as in \cite[Theorem $5.8.3$]{Diamond-Shurman}, we see that this space has a basis consisting of modular forms of the form $f_i \sl V_d$ where $d \mid 6N/M$ and $f_i$ is a newform of level $M$. 
 Our claim follows from a standard argument \cite[$\mathsection$~$6.1$]{Diamond-Shurman} once we know that this basis is stable under the action of $\Gal(\bar{\Q}/\Q)$.
 This follows from the fact that the Galois action commutes with $V_d$ and that the Atkin-Lehner eigenvalues at $2$ and $3$ are determined by the $2^{\operatorname{nd}}$ and $3^{\operatorname{rd}}$ coefficients, respectively, of the $f_i$. 
 Our proof is complete, given that these are integers by \cite[~~Corollary $3.2$]{AAD}. 
 \end{proof}

We also record the following analogue of \cite[Theorem $7.4$]{AAD}. It follows from the last assertion of part $(2)$ of Theorem~\ref{bigGalThm} using the same argument.

\begin{theorem}\label{4.2 analogue}
Let $k \in \Z^+$. Suppose that $\ell \geq 5$ is prime and that there exists an integer $a$ for which $2^a \equiv -2 \pmod{\ell}$.
Let $m \geq 1$ be an integer.
Let $N \in \Z^+$ be odd and squarefree with $\ell \nmid N$, and $3 \nmid N$ if $3 \nmid r$.
Assume that $\psi$ is a Dirichlet character modulo $N$.

If $(r,6)=1$, then there exists a positive density set $S$ of primes such that if $p \in S$, then $p \equiv -2 \pmod{\ell^m}$ and for each newform $f \in S^{\operatorname{new} 2,3}_k(6N, \psi^2,\epsilon_{2,r,\psi},\epsilon_{3,r,\psi})$, we have
\[
f \sl T_p \equiv -(-\epsilon_{2,r,\psi})^ap^{\frac{k}{2}-1}f \pmod{\ell^m}.
\]
The same result holds for $S_k^{\operatorname{new} 2}(2N, \psi^2,\epsilon_{2,r,\psi})$ if $(r,6)=3$.

\end{theorem}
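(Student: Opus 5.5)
The approach is to argue as in \cite[Theorem 7.4]{AAD}: build a single Galois element out of a Frobenius at $2$, and then apply the Chebotarev density theorem. I treat only $(r,6)=1$; the case $(r,6)=3$ is identical with $6N$ replaced by $2N$.

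\textbf{Adjusting the exponent.} First I will replace $a$ by an exponent $a'$ with
\[
2^{a'}\equiv -2 \pmod{\ell^m}, \qquad a'\equiv a \pmod 2.
\]
The group $(\Z/\ell^m\Z)^\times$ is cyclic, and reduction onto $(\Z/\ell\Z)^\times$ is an isomorphism on prime-to-$\ell$ parts. Since $-1\in\langle 2\rangle$ modulo $\ell$, we get $-2\in\langle 2\rangle$ modulo $\ell^m$, so such $a'$ exists; moreover $a'\equiv a \pmod{\operatorname{ord}_\ell 2}$. Since $-1\in\langle 2 \bmod \ell\rangle$, the order $\operatorname{ord}_\ell 2$ is even, and parity is preserved.

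\textbf{Local computation at $2$.} Let $K,\lambda,\mathcal O_\lambda$ be as in the proof of Theorem~\ref{3.9 analogue}, and fix a newform $f$ in $S^{\new 2,3}_k(6N,\psi^2,\ep_{2,r,\psi},\ep_{3,r,\psi})$. Since $2\nmid C_{\psi^2}$, the last assertion of part $(2)$ of Theorem~\ref{bigGalThm} gives
\[
\rho_f|_{G_2}\cong \pMatrix{\chi_\ell\psi_2}{*}{0}{\psi_2}, \qquad \psi_2(\Frob_2)=a(2)=-\ep_{2,r,\psi}2^{\frac k2-1}.
\]
Here the value of $a(2)$ comes from the relation $\ep_p=-p^{1-k/2}a(p)$. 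Choose one lift $\sigma_0\in G_2\subseteq G_\Q$ of arithmetic Frobenius, working simultaneously for all newforms $f$ in the space, and set $\sigma=\sigma_0^{a'}$. Then $\chi_\ell(\sigma)=2^{a'}\equiv -2\pmod{\ell^m}$. The eigenvalues of $\rho_f(\sigma)$ are $2^{a'}a(2)^{a'}$ and $a(2)^{a'}$, so
\[
\operatorname{tr}\rho_f(\sigma)=a(2)^{a'}(2^{a'}+1)\equiv -a(2)^{a'} = -(-\ep_{2,r,\psi})^{a'}\,(2^{a'})^{\frac k2-1}\pmod{\ell^m}.
\]

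\textbf{Chebotarev and passage to the basis.} Consider the product of the reductions of all $\rho_f$ modulo $\lambda^{me}$, where $e$ is the ramification index of $\lambda$, so that $\lambda^{me}\mathcal O_\lambda=\ell^m\mathcal O_\lambda$. Together with the cyclotomic character modulo $\ell^m$, this has finite image. Chebotarev gives a positive density set $S$ of primes $p\nmid 6N\ell$ whose Frobenius has the same image as $\sigma$ there. For $p\in S$ we get $p\equiv 2^{a'}\equiv -2\pmod{\ell^m}$, and then $(2^{a'})^{\frac k2-1}\equiv p^{\frac k2-1}$. Part $(1)$ of Theorem~\ref{bigGalThm} then gives
\[
a_f(p)\equiv -(-\ep_{2,r,\psi})^{a}p^{\frac k2-1}\pmod{\ell^m\mathcal O_\lambda},
\]
using $a'\equiv a\pmod 2$. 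This is the desired congruence on each newform. To pass to the whole space modulo $\ell^m$, I use the integral, Galois-stable basis of forms $f_i\sl V_d$ from the proof of Theorem~\ref{3.9 analogue}. Since $T_p$ commutes with $V_d$ for $p\nmid 6N$, the claim transfers exactly as in \cite[Theorem 7.3]{AAD}.

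The main point needing care is the choice of $a'$: it must give $2^{a'}\equiv-2$ modulo $\ell^m$ rather than only modulo $\ell$, while keeping the sign $(-\ep_{2,r,\psi})^{a}$ unchanged. The final integrality step is only bookkeeping.
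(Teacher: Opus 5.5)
Your strategy is the one the paper intends; it simply invokes the argument of \cite[Theorem 7.4]{AAD}. That argument reads off the shape of $\rho_f|_{G_2}$ from part (2) of Theorem~\ref{bigGalThm}, takes a power of a Frobenius lift at $2$, and applies Chebotarev. Your local trace computation, your proof that some $a'$ satisfies $2^{a'}\equiv -2\pmod{\ell^m}$, and your Chebotarev step are all correct.

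\textbf{The gap.} It is in the last line, where you replace $(-\ep_{2,r,\psi})^{a'}$ by $(-\ep_{2,r,\psi})^{a}$ ``using $a'\equiv a\pmod 2$.'' That step is valid only if $\ep_{2,r,\psi}=\pm1$, that is, if $\psi(2)=\pm1$. In this paper $\psi$ is arbitrary, so $\ep_{2,r,\psi}=-\psi(2)\pmfrac{8}{r/(r,3)}$ is in general a non-real root of unity, of order $D$ say. Consistently with this, $a(2)^2=\psi^2(2)2^{k-2}$. What you need is $a'\equiv a\pmod D$, not merely modulo $2$. This is exactly the new generality of the paper, so it is not a cosmetic slip.

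\textbf{Repair when $\ell\nmid D$.} This case covers, for instance, $\ell\nmid\phi(N)$, since $D\mid 2\phi(N)$.
\begin{itemize}
\item The solutions of $2^{x}\equiv-2\pmod{\ell^m}$ form a coset $a_0+\operatorname{ord}_{\ell^m}(2)\Z$ with $a_0\equiv a\pmod{\operatorname{ord}_\ell 2}$.
\item Since $\operatorname{ord}_{\ell^m}2/\operatorname{ord}_\ell 2$ is a power of $\ell$, we have $\gcd(\operatorname{ord}_{\ell^m}2,D)=\gcd(\operatorname{ord}_\ell 2,D)$.
\item By CRT you can therefore pick $a'$ in that coset with $a'\equiv a\pmod D$. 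Then $(-\ep_{2,r,\psi})^{a'}=(-\ep_{2,r,\psi})^{a}$ exactly.
\end{itemize}

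\textbf{When $\ell\mid D$.} This is allowed here, because $\ell\mid\phi(N)$ is not excluded. First, a root of unity $\zeta$ satisfies $\zeta\equiv1\pmod{\ell^m}$ only if $\zeta=1$: a nontrivial prime-to-$\ell$ part survives mod $\lambda$, and a nontrivial $\ell$-power root of unity $\eta$ has $v_\ell(\eta-1)<1$. So you genuinely need $D\mid a'-a$. For $m\ge2$ the residue of $a'$ modulo $\ell$ is typically forced by $2^{a'}\equiv-2\pmod{\ell^m}$. On the other hand, admissible $a$ are only determined modulo $\operatorname{ord}_\ell 2$, which is prime to $\ell$, so $a$ can have any residue modulo $\ell$. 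Hence no power of $\Frob_2$ produces $(-\ep_{2,r,\psi})^{a}$ for every admissible $a$. In this case your argument proves the statement for exponents $a$ with $2^{a}\equiv-2\pmod{\ell^m}$. Such $a$ exist by your first paragraph, and for them you may simply take $a'=a$ with no adjustment; the same applies whenever $m=1$.

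\textbf{Minor point.} The final ``passage to the basis'' is unnecessary. The statement concerns only newforms, and for $p\nmid 6N$ a newform satisfies $f\sl T_p=a_f(p)f$ exactly, so the congruence for $a_f(p)$ already gives the result.
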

 \section{Proof of Theorems~\ref{thm:cong1}-\ref{thm:cong2}}
 Let $\ell \geq 5$ be prime and $r$ be an odd integer. Let $m, \lambda \geq 1$ be an integer.
 Suppose that $N$ is a squarefree, odd, positive integer with $\ell \nmid N$. Assume that $3 \nmid N$ if $3 \nmid r$.
 Let $\psi$ be a Dirichlet character modulo $N$ and suppose that
 \begin{equation}
 F(z)=\displaystyle \sum_{n \equiv r \spmod{24}} a(n)q^{\frac{n}{24}} \in S_{\lambda+\frac{1}2}(N, \psi\nu_\eta^{r}).
 \end{equation}
 Furthermore, if $\lambda=1$, suppose that $F \in S^c_{3/2}(N,\psi\nu_\eta^r)$. 
 
 For each positive, squarefree integer $t$, recall that
 $\mathcal{S}_{t}(F) \in S^{\new 2,3}_{2\lambda}(6N,\psi^2,\ep_{2,r,\psi},\ep_{3,r,\psi})$ if $(r,6)=1$ and  $\mathcal{S}_{t}(F) \in S^{\new 2}_{2\lambda}(2N,\psi^2,\ep_{2,r,\psi})$ if $(r,6)=3$.
 As $t$ ranges over all squarefree integers, there are only finitely many non-zero possibilities for $\mathcal{S}_t(F)$ modulo $\ell^m$; let $\{g_1,\dots,g_k\}$ be a set of representatives for these possibilities.  Let
  $\{f_1,\dots,f_s\}$ be the set of newforms in $S^{\new 2,3}_{2\lambda}(6N,\psi^2,\ep_{2,r,\psi},\ep_{3,r,\psi})$ if $(r,6)=1$ and in $S^{\new 2}_{2\lambda}(2N,\psi^2, \ep_{2,r,\psi})$ if $(r,6)=3$.
 Write
 \begin{equation}\label{linearcombination}
 g_{j}=\sum^{s}_{i=1}\sum_{d \mid N}c_{i,j,d}f_{i} \sl V_d
 \end{equation}
 for $j \in \{1,\dots,k\}$ with $c_{i,j,d} \in \bar{\Q}$.
 Choose $c \geq 0$ such that $\ell^{c}c_{i,j,d}$ is integral at all primes above $\ell$ for all $c_{i,j,d}$.
 Recall also the character $\chi^{(r)}$ in \eqref{chir}.
 We require two short lemmas (which have proofs very similar to the proofs of \cite[Lemmas $7.5-7.6$]{AAD}, using \eqref{eq:heckedef24}, \eqref{f0iffShimura0}, and \eqref{equivariance}).

\begin{lemma}\label{5.1 analogue}
Suppose that $p, \ell \geq 5$ are prime, and that  $r$ is an odd integer.
Let $N$ be a squarefree, odd, positive integer such that $p \nmid N$, $\ell \nmid N$, and $3 \nmid N$ if $3 \nmid r$.
Let $\psi$ be a Dirichlet character modulo $N$ and
suppose that $F \in S_{\lambda+1/2}(N, \psi\nu_\eta^{r})$. 
Let $m \geq 1$ be an integer, and 
let the forms $f_{i}$ with $i \in \{1,\dots,s\}$ and the integer $c$ be defined as above. If  $\lambda_{p}$ is an integer with the property that $f_i \sl T_p \equiv \lambda_{p}f_{i} \pmod{\ell^{m+c}}$ for all $i \in \{1,\dots,s\}$, then
\begin{equation}
F \sl T_{p^2} \equiv \chi^{(r)}(p)\lambda_{p}F \pmod{\ell^m} .
\end{equation}
\end{lemma}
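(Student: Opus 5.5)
The strategy is to transfer the Hecke congruence from the integer-weight side to $F$ through the Shimura lifts, and then conclude with \eqref{f0iffShimura0}. Set $G:=F\sl T_{p^2}-\chi^{(r)}(p)\lambda_pF\in S_{\lambda+1/2}(N,\psi\nu_\eta^r)$. By \eqref{eq:heckedef24} its coefficients are integral at the primes above $\ell$. Moreover, $T_{p^2}$ preserves the orthogonality condition \eqref{eq:orthogtheta} when $\lambda=1$, since the Hecke operators are self-adjoint up to a character and preserve the theta subspace; so $G\in S^c_{3/2}$ in that case and its Shimura lifts are defined. It therefore suffices to show that $\Sh_t(G)\equiv 0\pmod{\ell^m}$ for every positive squarefree $t$. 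The statement \eqref{f0iffShimura0} is given modulo $\ell$, but the same standard argument works modulo $\ell^m$, or one can induct on $m$ by dividing by $\ell$ and reapplying the mod $\ell$ version.

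By \eqref{equivariance} and linearity of $\Sh_t$, and using $\chi^{(r)}(p)^2=1$, we get
\[
\Sh_t(G)=\chi^{(r)}(p)\bigl(\Sh_t(F)\sl T_p-\lambda_p\Sh_t(F)\bigr).
\]
Modulo $\ell^m$, $\Sh_t(F)$ is either $0$ or congruent to some $g_j$, so it suffices to show $g_j\sl T_p\equiv\lambda_p g_j\pmod{\ell^m}$. To do this, expand $g_j$ as in \eqref{linearcombination}.

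Since $p\nmid 6N$ and $(p,d)=1$, the operator $T_p$ commutes with $V_d$. Hence
\[
g_j\sl T_p-\lambda_pg_j=\sum_{i,d}c_{i,j,d}\,(f_i\sl T_p-\lambda_pf_i)\sl V_d .
\]
By hypothesis each $f_i\sl T_p-\lambda_pf_i$ is $\ell^{m+c}h_i$ with $h_i$ integral at $\ell$. Therefore the sum equals $\ell^m\sum_{i,d}(\ell^cc_{i,j,d})\,h_i\sl V_d$, which is $\equiv0\pmod{\ell^m}$ because each $\ell^cc_{i,j,d}$ is $\ell$-integral.

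The main point needing care is the passage from congruences of $\Sh_t(G)$ for all $t$ back to $G$ itself. This requires recovering the coefficients $a_G(tn^2)$ from \eqref{eq:shimlift}. One inverts the $L$-factor $L(s-\lambda+1,\psi(\frac{\bullet}{t}))$ by multiplying by the Dirichlet series of $\mu(n)\psi(n)(\frac nt)n^{\lambda-1}$, which has integral coefficients. This shows that every $\pmfrac{12}{n}a_G(tn^2)$ is $\equiv0\pmod{\ell^m}$. For $n\equiv r\pmod{24}$, writing $n=tn'^2$ forces $(n',6)=1$, so $\pmfrac{12}{n'}=\pm1$ is a unit and every coefficient of $G$ is covered.
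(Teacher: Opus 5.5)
Your proposal follows the paper's argument, which the paper defers to \cite[Lemma 7.5]{AAD}: apply \eqref{equivariance}, replace $\Sh_t(F)$ by some $g_j$ modulo $\ell^m$, expand $g_j$ as in \eqref{linearcombination}, commute $T_p$ past $V_d$, absorb the denominators $\ell^{-c}$ using the mod $\ell^{m+c}$ hypothesis, and finish with a mod $\ell^m$ version of \eqref{f0iffShimura0}.

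One local correction is needed. Your final sentence is false when $3\mid r$, because $n\equiv r\pmod{24}$ does not then force $3\nmid n'$. For example, with $r=3$ and $n=27=3\cdot 3^2$, the $\ptfrac{12}{\bullet}$-twisted series never sees $a(27)$. The formula \eqref{eq:shimlift} as printed is the $(r,6)=1$ one. For $3\mid r$, the lift of \cite{AAD} (the one that produces the factor $\chi^{(r)}(p)$ in \eqref{equivariance}) is twisted by $\ptfrac{-4}{\bullet}$ on odd indices, so there you only need $n'$ odd. Alternatively, use your other route, induction on $m$ from the mod $\ell$ statement \eqref{f0iffShimura0}, which needs no case distinction.
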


\begin{lemma}\label{5.2 analogue}
Suppose that  $p, \ell \geq 5$ are prime and that $r$ is an odd integer.
Let $N$ be a squarefree, odd, positive integer such that $p \nmid N$, $\ell \nmid N$, and $3 \nmid N$ if $3 \nmid r$.
Let $m \geq 1$ be an integer.
Let  $\psi$ be a Dirichlet character modulo $N$ and suppose that 
\begin{equation}\label{exp24thpowers}
F=\displaystyle\sum_{n \equiv r \spmod{24}} a(n)q^{\frac{n}{24}} \in S_{\lambda+\frac{1}2}(N,\psi\nu_\eta^{r}).
\end{equation} 
Suppose that  there exists $\alpha_{p} \in \{\pm 1\}$ with
\begin{equation}
F\sl T_{p^2} \equiv \alpha_{p}p^{\lambda-1}F \pmod{\ell^m} .
\end{equation}
Then we have
\begin{equation}
a(p^2n) \equiv 0 \pmod{\ell^m}  \ \ \ \text{ if } \ \ \ \pmfrac{n}{p} =\alpha_{p} \pmfrac{12}p \pmfrac{-1}p^{\frac{r-1}{2}}\psi(p).
\end{equation}
\end{lemma}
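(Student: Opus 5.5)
The plan is a direct coefficient comparison using the explicit formula \eqref{eq:heckedef24} for $T_{p^2}$, together with the observation that the hypothesis gives $F\sl T_{p^2}\equiv \alpha_p p^{\lambda-1}F$. I will treat the case $(r,6)=1$ and use \eqref{eq:heckedef24} as stated. When $(r,6)=3$, I will use the analogous formula from \cite{AAD}, in which $\pfrac{12n}{p}$ is replaced by the appropriate symbol. The case split in Theorem~\ref{thm:cong1} suggests this replacement contributes the extra factor $\pfrac{-3}{p}$; in any event the argument below is unchanged.

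First, fix $n\equiv r \pmod{24}$ with $\pfrac np=\alpha_p\pfrac{12}p\pfrac{-1}p^{\frac{r-1}2}\psi(p)$. This condition is nonzero, so $p\nmid n$, and hence $a(n/p^2)=0$. Comparing the $q^{n/24}$ coefficients of both sides of the congruence $F\sl T_{p^2}\equiv\alpha_p p^{\lambda-1}F \pmod{\ell^m}$ gives
\[
a(p^2n)+\pmfrac{-1}{p}^{\frac{r-1}{2}}\pmfrac{12}{p}\pmfrac{n}{p}\psi(p)p^{\lambda-1}a(n)\equiv \alpha_p p^{\lambda-1}a(n)\pmod{\ell^m}.
\]

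Next, substitute the assumed value of $\pfrac np$. The coefficient of $a(n)$ in the middle term becomes $\pfrac{12}p^2\pfrac{-1}p^{r-1}\psi(p)^2\alpha_p p^{\lambda-1}$. Since $p\ge5$ and $r$ is odd, $\pfrac{12}p^2=1$ and $\pfrac{-1}p^{r-1}=1$. The congruence therefore reduces to $a(p^2n)+\psi(p)^2\alpha_pp^{\lambda-1}a(n)\equiv\alpha_pp^{\lambda-1}a(n)$.

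The main obstacle is the factor $\psi(p)^2$, which need not equal $1$ when $\psi$ is not real. I would resolve it as follows. The condition $\pfrac np=\alpha_p\pfrac{12}p\pfrac{-1}p^{\frac{r-1}2}\psi(p)$ can only hold when $\psi(p)\in\{\pm1\}$, because the left side is $\pm1$. So whenever the hypothesis on $n$ is satisfiable we automatically have $\psi(p)^2=1$, and the terms cancel to give $a(p^2n)\equiv0\pmod{\ell^m}$. If $\psi(p)\notin\{\pm1\}$, the statement is vacuous.

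The only remaining care is to interpret congruences of algebraic integers modulo $\ell^m$ consistently, in the ring of algebraic numbers integral at primes above $\ell$, as the paper does throughout. The same computation then carries over verbatim for $(r,6)=3$.
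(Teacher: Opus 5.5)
Your argument is the paper's: the paper gives no separate proof and defers to \cite[Lemma 7.6]{AAD}, which is exactly this comparison of $q^{n/24}$-coefficients in \eqref{eq:heckedef24}, using $p\nmid n$ to kill the $a(n/p^2)$ term. Your treatment of $\psi(p)^2$ is also right, and it is the one place where a nonreal $\psi$ matters. Since $p\nmid N$, $\psi(p)$ is a root of unity, so the condition on $\pfrac np\in\{0,\pm1\}$ can only hold when $\psi(p)=\pm1$. Otherwise the lemma is vacuous.

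The one thing to fix is your remark about $(r,6)=3$. Saying the argument is ``unchanged'' is only valid if the middle coefficient of $T_{p^2}$ is literally $\pfrac{-1}{p}^{\frac{r-1}2}\pfrac{12n}{p}\psi(p)p^{\lambda-1}$ in that case too. Your guess of an extra factor $\pfrac{-3}{p}$ contradicts this. With that factor, substituting the lemma's condition would leave $a(p^2n)\equiv\big(1-\pfrac{-3}{p}\big)\alpha_p p^{\lambda-1}a(n)$, which is not $0$ when $p\equiv 2\pmod 3$.

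The guess is in fact false. Write $n=3n'$. The form $F(8z)=\sum a(3n')q^{n'}$ has character $\psi\ptfrac{-1}{\bullet}^{\lambda+\frac{r-1}2}$ in Shimura's sense. Shimura's $T_{p^2}$ then produces the symbol $\pfrac{n'}{p}=\pfrac{3n}{p}=\pfrac{12n}{p}$, so \eqref{eq:heckedef24} holds verbatim. As a check, an extra $\pfrac{-3}{p}$ would stop $\eta^3$, whose coefficients are $a(3k^2)=\pfrac{-4}{k}k$, from being a $T_{p^2}$-eigenform for $p\equiv 2\pmod 3$.

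The factor $\pfrac{-3}{p}$ in Theorem~\ref{thm:cong1} comes instead from $\chi^{(r)}(p)=\pfrac{-4}{p}$ in Lemma~\ref{5.1 analogue}, because $\pfrac{-4}{p}\pfrac{12}{p}=\pfrac{-3}{p}$.
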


We now prove Theorems~\ref{thm:cong1}-\ref{thm:cong2}.

\begin{proof}[Proof of Theorem~\ref{thm:cong1}]
Suppose that $\ell \geq 5$ is a prime.
Let $c$ be an integer as in Lemma~\ref{5.1 analogue}.
If $(r,6)=1$, then Theorem~\ref{3.9 analogue} implies that there exists a positive density set $S$ of primes such that if $p \in S$, then we have $p \equiv 1 \pmod{\ell^m}$, $p \nmid 6N$, and
\[
f \sl T_p \equiv -f \pmod{\ell^{m+c}}
\]
for each newform $f \in  S^{\new 2,3}_{2\lambda}(6N,\psi^2,\ep_{2,r,\psi},\ep_{3,r,\psi})$; when $(r,6)=3$, the same conclusion holds for $S^{\operatorname{new} 2}_{2\lambda}(2N, \psi^2,\ep_{2,r,\psi})$. For such $p$, it follows from Lemma~\ref{5.1 analogue} that
\[
F \sl T_{p^2} \equiv \chi^{(r)}(p)F \pmod{\ell^m}.
\]
The result follows from Lemma~\ref{5.2 analogue}.
\end{proof}

 \begin{proof}[Proof of Theorem~\ref{thm:cong2}]
 Suppose that $\ell \geq 5$ is a prime such that $2^{a} \equiv -2 \pmod{\ell} $ for some integer $a$. Let $c$ be an integer as in Lemma~\ref{5.1 analogue}.
 If  $(r,6)=1$, then
 by Theorem~\ref{4.2 analogue}, there exist  $\beta \in \{\pm 1\}$ and a positive density set $S$ of primes such that if $p \in S$, then $p \equiv -2 \pmod{\ell^{m}}$, $p \nmid 6N$, and for each newform $f \in S^{\new 2,3}_{2\lambda}(6N,\psi^2, \ep_{2,r,\psi},\ep_{3,r,\psi})$, we have
 \begin{equation}
 f \sl T_p \equiv \beta p^{\lambda-1}f \pmod{\ell^{m+c}}.
 \end{equation}
 If $(r, 6)=3$ the same result holds for $S^{\new 2}_{2\lambda}\(2N,\psi^2,\ep_{2,r,\psi}\)$.
 
 In either case, for  such $p$, Lemma~\ref{5.1 analogue} implies that
 \begin{equation}
F \sl T_{p^2} \equiv \beta\chi^{(r)}(p)p^{\lambda-1}F \pmod{\ell^m} .
\end{equation}
The result follows from Lemma~\ref{5.2 analogue}. 
 \end{proof}

\section{Acknowledgements}
The author thanks Patrick Allen for several helpful discussions, and Olivia Beckwith for helpful comments.
\bibliographystyle{amsalpha}
\bibliography{Quadraticcongruences}
\end{document}